\newtheorem{theorem}{Theorem}[section]
\newtheorem{lemma}[theorem]{Lemma}
\newtheorem{definition}{Definition}[section]
\newtheorem{remark}{Remark}
\numberwithin{equation}{section}
\newcommand{\st}{\textnormal{s.t.}}
\newcommand{\Prob}{\textnormal{Prob}}
\newcommand{\sign}{\textnormal{sign}}
\newcommand{\be}{\begin{equation}}
\newcommand{\ee}{\end{equation}}
\newcommand{\ba}{\begin{array}}
\newcommand{\ea}{\end{array}}
\newcommand{\half}{\frac{1}{2}}
\newcommand{\br}{\mathbb{R}}
\newcommand{\mtn}{{m \times n}}
\newcommand{\LCal}{\mathcal{L}}
\newcommand{\XCal}{\mathcal{X}}
\newcommand{\YCal}{\mathcal{Y}}
\newcommand{\ZCal}{\mathcal{Z}}
\newcommand{\Shrink}{\mathrm{Shrink}}
\newcommand{\etal}{{\it et al.\ }}
\newcommand{\argmin}{\mathrm{argmin}}
\newcommand{\by}{\bar{y}}
\newcommand{\blambda}{\bar{\lambda}}
\newcommand{\bz}{\bar{z}}
\newcommand{\bpm}{\begin{pmatrix}}
\newcommand{\epm}{\end{pmatrix}}
\begin{document}

\title{An Extragradient-Based Alternating Direction Method \\ for Convex Minimization}

\author{
Tianyi Lin
\thanks{Department of Systems Engineering and Engineering Management, The Chinese University of Hong Kong, Shatin, N. T., Hong Kong. Email: linty@se.cuhk.edu.hk.}
\and Shiqian Ma
\thanks{Department of Systems Engineering and Engineering Management, The Chinese University of Hong Kong, Shatin, N. T., Hong Kong. Email: sqma@se.cuhk.edu.hk. Research of this author was supported in part by a Direct Grant of The Chinese University of Hong
Kong (Project ID: 4055016) and the Hong Kong Research Grants Council
General Research Funds Early Career Scheme (Project ID: CUHK 439513).}
\and Shuzhong Zhang
\thanks{Department of Industrial and Systems Engineering, University of Minnesota, Minneapolis, MN 55455. Email: zhangs@umn.edu. Research of this author was supported in part by the NSF Grant CMMI-1161242.}}

\date{\today}
\maketitle

\begin{abstract}
In this paper, we consider the problem of minimizing the sum of two convex functions subject to linear linking constraints. The classical alternating direction type methods usually assume that the two convex functions have relatively easy proximal mappings. However, many problems arising from statistics, image processing and other fields have the structure that while one of the two functions has easy proximal mapping, the other function is smoothly convex but does not have an easy proximal mapping. Therefore, the classical alternating direction methods cannot be applied. To deal with the difficulty,
we propose in this paper an alternating direction method based on {\em extragradients}. Under the assumption that the smooth function has a Lipschitz continuous gradient, we prove that the proposed method returns an $\epsilon$-optimal solution within $O(1/\epsilon)$ iterations. We apply the proposed method to solve a new statistical model called fused logistic regression. Our numerical experiments show that the proposed method performs very well when solving the test problems. We also test the performance of the proposed method through solving the lasso problem arising from statistics and compare the result with several existing efficient solvers for this problem; the results are very encouraging indeed.

\vspace{0.3cm}
\noindent {\bf Keywords:} Alternating Direction Method; Extragradient; Iteration Complexity; Lasso; Fused Logistic Regression

\vspace{0.3cm}

\noindent {\bf Mathematics Subject Classification 2010:} 90C25, 68Q25, 62J05

\end{abstract}

\newpage

\section{Introduction} \label{sec:intro}

In this paper, we consider solving the following convex optimization problem:
\be\label{prob:min-sum-2}\ba{ll} \min_{x\in\br^n,y\in\br^p} & f(x) + g(y) \\ \st & Ax+ By =b \\ & x\in\XCal, y\in\YCal, \ea\ee
where $f$ and $g$ are convex functions, $A\in\br^{m\times n}$, $B\in\br^{m\times p}$, $b\in\br^m$, $\XCal$ and $\YCal$ are convex sets and the projections on them can be easily obtained. Problems in the form of \eqref{prob:min-sum-2} arise in different applications in practice and we will show some examples later.
A recent very popular way to solve \eqref{prob:min-sum-2} is to apply the alternating direction method of multipliers (ADMM). A typical iteration of ADMM for solving \eqref{prob:min-sum-2} can be described as
\be\label{alg:admm-original}\left\{\ba{lll} x^{k+1} & := & \argmin_{x\in \XCal} \ \LCal_\gamma(x,y^k;\lambda^k) \\
                                            y^{k+1} & := & \argmin_{y\in \YCal} \ \LCal_\gamma(x^{k+1},y;\lambda^k) \\
                                            \lambda^{k+1} & := & \lambda^k - \gamma(Ax^{k+1}+By^{k+1}-b),\ea\right.\ee
where the augmented Lagrangian function $\LCal_\gamma(x,y;\lambda)$ for \eqref{prob:min-sum-2} is defined as
\be\label{aug-lag-func} \LCal_\gamma(x,y;\lambda) := f(x) + g(y) -\langle\lambda,Ax+By-b\rangle+\frac{\gamma}{2}\|Ax+By-b\|^2, \ee
where $\lambda$ is the Lagrange multiplier associated with the linear constraint $Ax+By=b$ and $\gamma>0$ is a penalty parameter. The ADMM is closely related to some operator splitting methods such as Douglas-Rachford operator splitting method \cite{Douglas-Rachford-56} and Peaceman-Rachford operator splitting method \cite{Peaceman-Rachford-55} for finding the zero of the sum of two maximal monotone operators. In particular, it was shown by Gabay \cite{Gabay-83} that ADMM \eqref{alg:admm-original} is equivalent to applying the Douglas-Rachford operator splitting method to the dual problem of \eqref{prob:min-sum-2}. The ADMM and operator splitting methods were then studied extensively in the literature and some generalized variants were proposed (see, e.g., \cite{Lions-Mercier-79,Fortin-Glowinski-1983,Glowinski-LeTallec-89,Eckstein-thesis-89,Eckstein-Bertsekas-1992}). The ADMM was revisited recently because it was found very efficient for solving many sparse and low-rank optimization problems, such as compressed sensing \cite{Yang-Zhang-2009}, compressive imaging \cite{Wang-Yang-Yin-Zhang-2008,Goldstein-Osher-08}, robust PCA \cite{Tao-Yuan-SPCP-2011}, sparse inverse covariance selection \cite{Yuan-2009,Scheinberg-Ma-Goldfarb-NIPS-2010}, sparse PCA \cite{Ma-SPCA-2011-submit} and semidefinite programming \cite{Wen-Goldfarb-Yin-2009} etc. Moreover, the iteration complexity of ADMM \eqref{alg:admm-original} was recently established by He and Yuan \cite{He-Yuan-rate-ADM-2012} and Monteiro and Svaiter \cite{Monteiro-Svaiter-2010a}, and some new analysis for iteration complexity for obtaining $\epsilon$-optimal solution measured by both objective error and constraint violation was given in \cite{Lin-Ma-Zhang-admm-sublinear-2014} and \cite{Davis-Yin-2014a}. The recent survey paper by Boyd \etal \cite{Boyd-etal-ADM-survey-2011} listed many interesting applications of ADMM in statistical learning and distributed optimization.

Note that the efficiency of ADMM \eqref{alg:admm-original} actually depends on whether the two subproblems in \eqref{alg:admm-original} can be solved efficiently or not. This requires that the following two problems can be solved efficiently for given $\tau>0$, $w,z\in\br^m$:
\be\label{sub-f} x := \argmin_{x\in\XCal} \ f(x) + \frac{1}{2\tau}\|Ax-w\|^2 \ee
and
\be\label{sub-g} y := \argmin_{y\in\YCal} \ g(y) + \frac{1}{2\tau}\|By-z\|^2. \ee
When $\XCal$ and $\YCal$ are the whole spaces and $A$ and $B$ are identity matrices, \eqref{sub-f} and \eqref{sub-g} are known as the proximal mappings of functions $f$ and $g$, respectively. Thus, in this case, ADMM \eqref{alg:admm-original} requires that the proximal mappings of $f$ and $g$ are easy to be obtained. In the cases that $A$ and $B$ are not identity matrices, there are results on linearized ADMM (see, e.g., \cite{Yang-Zhang-2009,Yang-Yuan-Linearized-trace-norm,Zhang-Burger-Bresson-Osher-09}) which linearizes the quadratic penalty term in such a way that problems \eqref{sub-f} and \eqref{sub-g} still correspond to the proximal mappings of functions $f$ and $g$. The global convergence of the linearized ADMM is guaranteed under certain conditions on a linearization step size parameter.

There are two interesting problems that are readily solved by ADMM \eqref{alg:admm-original} since the involved functions have easy proximal mappings.
One problem is the so-called robust principal component pursuit (RPCP) problem:
\be\label{prob:rpcp} \min_{X\in\br^\mtn,Y\in\br^\mtn} \ \|X\|_* + \rho\|Y\|_1, \ \st, \ X + Y =M, \ee
where $\rho>0$ is a weighting parameter, $M\in\br^\mtn$ is a given matrix, $\|X\|_*$ is the nuclear norm of $X$, which is defined as the sum of singular values of $X$, and $\|Y\|_1:=\sum_{i,j}|Y_{ij}|$ is the $\ell_1$ norm of $Y$.
Problem \eqref{prob:rpcp} was studied by Cand\`es \etal \cite{Candes-Li-Ma-Wright-RPCA-2009} and Chandrasekaran \etal \cite{Chandrasekaran-Sanghavi-Parrilo-Willsky-2009} as a convex relaxation of the robust PCA problem. Note that the two involved functions, the nuclear norm $\|X\|_*$ and the $\ell_1$ norm $\|Y\|_1$, have easy proximal mappings (see, e.g., \cite{Ma-Goldfarb-Chen-2008} and \cite{Hale-Yin-Zhang-SIAM-2008}).
The other problem is the so-called sparse inverse covariance selection, which is also known as the graphical lasso problem \cite{Yuan-Lin-2007,Banerjee-ElGhaoui-Aspremont-2008,Friedman-Hastie-Tibshirani-2007}. This problem, which estimates a sparse inverse covariance matrix from sample data, can be formulated as
\be\label{prob:glasso} \min_{X\in\br^{n\times n}} \ -\log\det(X) + \langle \hat{\Sigma},X \rangle + \rho\|X\|_1, \ee
where the first convex function $-\log\det(X) + \langle \hat{\Sigma},X \rangle$ is the negative log-likelihood function for given sample covariance matrix $\hat{\Sigma}$, and the second convex function $\rho\|X\|_1$ is used to promote the sparsity of the resulting solution. Problem \eqref{prob:glasso} is of the form of \eqref{prob:min-sum-2} because it can be rewritten equivalently as
\be\label{prob:glasso-XY} \min_{X\in\br^{n\times n},Y\in\br^{n\times n}} \ -\log\det(X) + \langle \hat{\Sigma},X \rangle + \rho\|Y\|_1, \ \st, \ X-Y=0. \ee
Note that the involved function $-\log\det(X)$ has an easy proximal mapping (see, e.g., \cite{Scheinberg-Ma-Goldfarb-NIPS-2010} and \cite{Yuan-2009}).

However, there are many problems arising from statistics, machine learning and image processing which do not have easy subproblems \eqref{sub-f} and \eqref{sub-g} even when $A$ and $B$ are identity matrices. One such example is the so-called sparse logistic regression problem. For given training set $\{a_i,b_i\}_{i=1}^m$ where $a_1,a_2,\ldots,a_m$ are the $m$ samples and $b_1,\ldots,b_m$ with $b_i\in\{-1,+1\},i=1,\ldots,m$ are the binary class labels. The likelihood function for these $m$ samples is $\prod_{i=1}^m \Prob(b_i\mid a_i)$, where
\[\Prob(b\mid a) := \frac{1}{1+\exp(-b(a^\top x+c))},\]
is the conditional probability of the label $b$ condition on sample $a$, where $x\in\br^n$ is the weight vector and $c\in\br$ is the intercept, and $a^\top x+c=0$ defines a hyperplane in the feature space, on which $\Prob(b\mid a)=0.5$. Besides, $\Prob(b\mid a)>0.5$ if $a^\top x+c$ has the same sign as $b$, and $\Prob(b\mid a)<0.5$ otherwise. The sparse logistic regression (see \cite{Liu-logistic-regression-2009}) can be formulated as the following convex optimization problem
\be\label{logistic-reg} \min_{x,c} \ \ell(x,c) + \alpha \|x\|_1, \ee
where $\alpha>0$ is a weighting parameter, $\ell(x,c)$ denotes the average logistic loss function, which is defined as
\[\ell(x,c) := -\frac{1}{m}\prod_{i=1}^m \Prob(b_i\mid a_i) = \frac{1}{m}\sum_{i=1}^m \log(1+\exp(-b_i(a_i^\top x+c))),\]
and the $\ell_1$ norm $\|x\|_1$ is imposed to promote the sparsity of the weight vector $x$.
If one wants to apply ADMM \eqref{alg:admm-original} to solve \eqref{logistic-reg}, one has to introduce a new variable $y$ and rewrite \eqref{logistic-reg} as
\be\label{logistic-reg-xy}\ba{ll} \min_{x,c,y} & \ell(x,c) + \alpha \|y\|_1, \\
                                \st &  x - y =0.\ea\ee
When ADMM \eqref{alg:admm-original} is applied to solve \eqref{logistic-reg-xy}, although the subproblem with respect to $y$ is easily solvable (an $\ell_1$ shrinkage operation), the subproblem with respect to $(x,c)$ is difficult to solve because the proximal mapping of the logistic loss function $\ell(x,c)$ is not easily computable.

Another example is the following fused logistic regression problem:
\be\label{fused-logistic-reg-intro}\min_{x,c} \ \ell(x,c) + \alpha\|x\|_1 + \beta\sum_{j=2}^n |x_j-x_{j-1}|, \ee
or its constrained version:
\[\ba{ll} \min_{x,c} & \ell(x,c) + \alpha\|x\|_1 \\
            \st      & \sum_{j=2}^n |x_j-x_{j-1}| \leq \xi,\ea\]
            where $\alpha$, $\beta$ and $\xi$ are positive parameters.
These problems cannot be solved by ADMM \eqref{alg:admm-original}, again because of the difficulty in
computing the proximal mapping of $\ell(x,c)$. We will discuss this example in more details in Section \ref{sec:fused-logistic}.

But is it really crucial to compute the proximal mapping exactly in the ADMM scheme? After all, ADMM can be viewed as an approximate dual gradient ascent method. As such, computing the proximal mapping exactly is in some sense redundant, since on the dual side the iterates are updated based on the gradient ascent method. Without sacrificing the scale of approximation to optimality, an update on the primal side based on the gradient information (or at least part of it), by the principle of primal-dual symmetry, is entirely appropriate. Our subsequent analysis indeed confirms this belief. Moreover, there are inexact versions of augmented Lagrangian method and alternating direction method (see, e.g. \cite{He-Liao-Han-Yang-2002,Eckstein-Silva-inexact-alm-2013}), and our proposed method can be viewed as a variant of the inexact version of ADMM.

{\bf Our contribution}. In this paper, we propose a new alternating direction method for solving \eqref{prob:min-sum-2}. This new method requires only one of the functions in the objective to have an easy proximal mapping, and the other involved function is merely required to be smooth. Note that the aforementioned examples, namely sparse logistic regression \eqref{logistic-reg} and fused logistic regression \eqref{fused-logistic-reg-intro}, are both of this type. In each iteration, the proposed method involves only computing the proximal mapping for one function and computing the gradient for the other function. Under the assumption that the smooth function has a Lipschitz continuous gradient, we prove that the proposed method finds an $\epsilon$-optimal solution to \eqref{prob:min-sum-2} within $O(1/\epsilon)$ iterations.

The rest of this paper is organized as follows. In Section \ref{sec:egadm} we propose the extragradient-based alternating direction method for solving problem \eqref{prob:min-sum-2}. The iteration complexity of the proposed method is analyzed in Section \ref{sec:complexity}. In Section \ref{sec:fused-logistic}, we discuss the details of the fused logistic regression problem and how to use our proposed method to solve it. Numerical experiments are conducted in Section \ref{sec:numerical}. Finally we draw some conclusions in Section \ref{sec:conclusion}.

\section{An Alternating Direction Method Based on Extragradient}\label{sec:egadm}

In this section, we consider solving \eqref{prob:min-sum-2} where $f$ has an easy proximal mapping, while $g$ is smooth but does not have an easy proximal mapping. Note that in this case, ADMM \eqref{alg:admm-original} cannot be applied to solve \eqref{prob:min-sum-2} as the solution for the second subproblem in \eqref{alg:admm-original} is not available.

Note that in ADMM \eqref{alg:admm-original}, the updating formula for $\lambda$ can be seen as a gradient step for $\lambda$ with respect to the augmented Lagrangian function.
Now because minimizing the augmented Lagrangian function with respect to $y$ in \eqref{alg:admm-original} is not possible, we can also take a gradient step. However, our analysis shown later indicates that we have to take a gradient step for the Lagrangian function
\be\label{lag-func} \LCal(x,y;\lambda) := f(x) + g(y) - \langle \lambda, Ax+By-b \rangle\ee
with respect to $y$.
This leads to the following updating procedure for $(x^k,y^k,\lambda^k)$:
\be\label{alg:g-adm}\left\{\ba{lll}
x^{k+1} & := & \argmin_{x\in\XCal} \ \LCal_{\gamma}(x,y^k;\lambda^k) + \half\|x-x^k\|_{H}^2\\
y^{k+1} & := & [y^k - \gamma \nabla_y \LCal(x^{k+1},y^k;\lambda^k)]_{\YCal} \\
\lambda^{k+1} & := & \lambda^k - \gamma (Ax^{k+1}+By^{k+1}-b),
\ea\right.\ee
where $[y]_\YCal$ denotes the projection of $y$ onto $\YCal$. Note that we have added a proximal term $\half\|x-x^k\|_{H}^2$ to the $x$-subproblem, where $H$ is a pre-specified positive semidefinite matrix.
In this paper, we propose to take an extra gradient step for both $y$ and $\lambda$, which results in the following extragradient-based alternating direction method (EGADM) to solve Problem \eqref{prob:min-sum-2}. Starting from
any initial point $x^0\in\XCal$, $y^0\in\YCal$ and $\lambda^0\in\br^m$, a typical iteration of EGADM can be described as:
\be\label{alg:eg-adm}\left\{\ba{lll}
x^{k+1} & := & \argmin_{x\in\XCal} \ \LCal_{\gamma}(x,y^k;\lambda^k) + \half\|x-x^k\|_{H}^2\\
\bar{y}^{k+1} & := & [y^k - \gamma \nabla_y \LCal(x^{k+1},y^k;\lambda^k)]_{\YCal} \\
\bar{\lambda}^{k+1} & := & \lambda^k - \gamma (Ax^{k+1}+By^k-b) \\
y^{k+1} & := & [y^k - \gamma \nabla_y \LCal(x^{k+1},\bar{y}^{k+1};\bar{\lambda}^{k+1})]_{\YCal} \\
\lambda^{k+1} & := & \lambda^k - \gamma (Ax^{k+1}+B\bar{y}^{k+1}-b).
\ea\right.\ee

Note that the first subproblem in \eqref{alg:eg-adm} is to minimize the augmented Lagrangian function plus a proximal term $\half\|x-x^k\|_{H}^2$ with respect to $x$, i.e.,
\be\label{alg:eg-adm-sub-x} x^{k+1} := \argmin_{x\in\XCal} \ f(x) -\langle\lambda^k,Ax+By^k-b\rangle+\frac{\gamma}{2}\|Ax+By^k-b\|^2 + \half\|x-x^k\|_{H}^2.\ee
In sparse and low-rank optimization problems, the proximal term $\half\|x-x^k\|_{H}^2$ is usually imposed to cancel the effect of matrix $A$ in the quadratic penalty term. One typical choice of $H$ is $H=0$ when $A$ is identity, and $H=\tau I - \gamma A^\top A$ when $A$ is not identity, where $\tau > \gamma \lambda_{\max}(A^\top A)$ and $\lambda_{\max}(A^\top A)$ denotes the largest eigenvalue of $A^\top A$.
We assume that \eqref{alg:eg-adm-sub-x} is relatively easy to solve. Basically, when $A$ is an identity matrix, and $f$ is a function arising from sparse optimization such as the $\ell_1$ norm, $\ell_2$ norm, nuclear norm and so on, then
the subproblem \eqref{alg:eg-adm-sub-x} is usually easy to solve.

The EGADM \eqref{alg:eg-adm} can be written equivalently in a more compact form. By defining
\[z := \bpm y \\ \lambda \epm, \bar{z} := \bpm \bar{y} \\ \bar{\lambda} \epm, F(x, z) = \bpm \nabla_y \LCal(x,y;\lambda) \\ -\nabla_{\lambda} \LCal(x,y;\lambda) \epm, \ZCal := \YCal \times \br^m,\]
we can rewrite \eqref{alg:eg-adm} equivalently as
\be\label{alg:eg-adm-rewrite-z}\left\{\ba{lll}
x^{k+1} & := & \argmin_{x\in\XCal} \LCal_{\gamma}(x,y^k;\lambda^k) + \half\|x-x^k\|_{H}^2 \\
\bz^{k+1} & := & [z^k - \gamma F(x^{k+1},z^k)]_{\ZCal} \\
z^{k+1} & := & [z^k - \gamma F(x^{k+1},\bz^{k+1})]_{\ZCal}.
\ea\right.\ee
That is, in each iteration of EGADM, one minimizes the augmented Lagrangian function plus a proximal term with respect to $x$, and then takes extragradient steps for both $y$ and $\lambda$ for the Lagrangian function.

The idea of extragradient is not new. In fact, the extragradient method as we know it was originally proposed by Korpelevich for variational inequalities and for solving saddle-point problems \cite{Korpelevich-EG-1976,Korpelevich-EG-1983}. Korpelevich proved the convergence of the extragradient method \cite{Korpelevich-EG-1976,Korpelevich-EG-1983}. For recent results on convergence of extragradient type methods, we refer to \cite{Noor-extragradient-2003} and the references therein. The iteration complexity of extragradient method was analyzed by Nemirovski in \cite{Nemirovski-Prox-siopt-2005}. Recently, Monteiro and Svaiter \cite{Monteiro-Svaiter-2010,Monteiro-Svaiter-2011,Monteiro-Svaiter-2010a} studied the iteration complexity results of the hybrid proximal extragradient method proposed by Solodov and Svaiter in \cite{Solodov-Svaiter-HPE-1999} and its variants. More recently, Bonettini and Ruggiero studied a generalized extragradient method for total variation based image restoration problem \cite{Bonettini-Ruggiero-2011}.

\section{Iteration Complexity}\label{sec:complexity}

In this section, we analyze the iteration complexity of EGADM, i.e., \eqref{alg:eg-adm}. We show that under the assumption that the smooth function $g$ has a Lipschitz continuous gradient, EGADM \eqref{alg:eg-adm} finds an $\epsilon$-optimal solution 
to Problem \eqref{prob:min-sum-2} in terms of both objective error and constraint violation within $O(1/\epsilon)$ iterations.

For the linearly constrained convex minimization \eqref{prob:min-sum-2}, the most natural way to define the $\epsilon$-optimal solution is in terms of the objective error and constraint violation as follows (see also \cite{Lin-Ma-Zhang-admm-sublinear-2014,Davis-Yin-2014a}).
\begin{definition}\label{def:epsilon-optimal-primal}
We call $(\hat{x},\hat{y})\in\XCal\times\YCal$ an $\epsilon$-optimal solution to Problem \eqref{prob:min-sum-2} in terms of objective error and constraint violation, if the following holds,
\be\label{eps-optimal-definition-primal} |f(\hat{x})+g(\hat{y})-f(x^*)-g(y^*)| = O(\epsilon), \quad \mbox{ and } \quad \|A\hat{x}+B\hat{y}-b\| = O(\epsilon) \ee
where $(x^*,y^*)\in \XCal^\star\times\YCal^\star$ is any optimal solution to \eqref{prob:min-sum-2}.
\end{definition}

Throughout this paper, we assume that the primal-dual optimal solution sets $\XCal^*\times \YCal^*\times\Lambda^*$ of \eqref{prob:min-sum-2} is non-empty. Note that $(x^*,y^*;\lambda^*)$ is an optimal solution to \eqref{prob:min-sum-2} if and only if
$x^*\in\XCal$, $y^*\in\YCal$, and
\be\label{kkt}\left\{\ba{ll} \langle x-x^*, \partial f(x^*)-A^\top\lambda^* \rangle \geq 0, & \forall x\in \XCal, \\
                             \langle y-y^*, \partial g(y^*)-B^\top\lambda^* \rangle \geq 0, & \forall y\in \YCal, \\
                             Ax^*+By^*=b, \ea\right.\ee
where $\partial f(x)$ and $\partial g(y)$ denote subgradients of $f$ and $g$.

Now we are ready to analyze the iteration complexity of EGADM \eqref{alg:eg-adm-rewrite-z}, or equivalently, \eqref{alg:eg-adm}, for an $\epsilon$-optimal solution in the sense of Definition \ref{def:epsilon-optimal-primal}.
We will prove the following lemma first.
\begin{lemma}\label{lem:3-point-inequality}
The sequence $\{x^{k+1},z^k,\bar{z}^k\}$ generated by \eqref{alg:eg-adm-rewrite-z} satisfies the following inequality:
\be\label{lem:3-point-inequality-conclusion} \ba{ll} & \langle \gamma F(x^{k+1}, \bar{z}^{k+1}), \bar{z}^{k+1}-z^{k+1} \rangle - \half\|z^k-z^{k+1}\|^2 \\ \leq & \gamma^2\|F(x^{k+1},\bar{z}^{k+1})-F(x^{k+1},z^k)\|^2 -\half\|\bar{z}^{k+1}-z^k\|^2-\half\|\bar{z}^{k+1}-z^{k+1}\|^2. \ea\ee
\end{lemma}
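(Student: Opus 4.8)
The plan is to derive \eqref{lem:3-point-inequality-conclusion} purely from the two projection steps in \eqref{alg:eg-adm-rewrite-z}, using the standard variational characterization of the Euclidean projection onto a convex set. Recall that $\bar z^{k+1} = [z^k - \gamma F(x^{k+1},z^k)]_\ZCal$ means $\langle z^k - \gamma F(x^{k+1},z^k) - \bar z^{k+1}, z - \bar z^{k+1}\rangle \le 0$ for all $z\in\ZCal$, and likewise $z^{k+1} = [z^k - \gamma F(x^{k+1},\bar z^{k+1})]_\ZCal$ gives $\langle z^k - \gamma F(x^{k+1},\bar z^{k+1}) - z^{k+1}, z - z^{k+1}\rangle \le 0$ for all $z\in\ZCal$. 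First I would instantiate the first inequality at $z = z^{k+1}$ (which lies in $\ZCal$ since it is itself a projection) to get $\langle z^k - \gamma F(x^{k+1},z^k) - \bar z^{k+1},\ z^{k+1} - \bar z^{k+1}\rangle \le 0$, and instantiate the second at $z = \bar z^{k+1}\in\ZCal$ to get $\langle z^k - \gamma F(x^{k+1},\bar z^{k+1}) - z^{k+1},\ \bar z^{k+1} - z^{k+1}\rangle \le 0$.

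Next I would add these two inequalities. The cross terms involving $z^k$ combine into an inner product of $z^k$ against $(z^{k+1}-\bar z^{k+1}) + (\bar z^{k+1}-z^{k+1}) = 0$ — actually more carefully, adding produces $\langle z^k - \bar z^{k+1},\ z^{k+1}-\bar z^{k+1}\rangle + \langle z^k - z^{k+1},\ \bar z^{k+1}-z^{k+1}\rangle - \gamma\langle F(x^{k+1},z^k),\ z^{k+1}-\bar z^{k+1}\rangle - \gamma\langle F(x^{k+1},\bar z^{k+1}),\ \bar z^{k+1}-z^{k+1}\rangle \le 0$. I would rearrange to isolate $\langle \gamma F(x^{k+1},\bar z^{k+1}),\ \bar z^{k+1}-z^{k+1}\rangle$ on the left: it is bounded above by the purely geometric terms plus $\gamma\langle F(x^{k+1},z^k) - F(x^{k+1},\bar z^{k+1}),\ z^{k+1}-\bar z^{k+1}\rangle$ (collecting the two gradient inner products, since both are evaluated against $\pm(z^{k+1}-\bar z^{k+1})$). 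The geometric terms $\langle z^k-\bar z^{k+1},\ z^{k+1}-\bar z^{k+1}\rangle + \langle z^k - z^{k+1},\ \bar z^{k+1}-z^{k+1}\rangle$ I would expand via the polarization identity $\langle a,b\rangle = \tfrac12(\|a\|^2+\|b\|^2-\|a-b\|^2)$, which should reorganize exactly into $\tfrac12\|z^k-z^{k+1}\|^2 - \tfrac12\|\bar z^{k+1}-z^k\|^2 - \tfrac12\|\bar z^{k+1}-z^{k+1}\|^2$ (this is the classical "three-point identity" for projection/prox steps).

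Finally, for the remaining residual inner product $\gamma\langle F(x^{k+1},z^k) - F(x^{k+1},\bar z^{k+1}),\ z^{k+1}-\bar z^{k+1}\rangle$, I would apply the elementary bound $\langle u,v\rangle \le \gamma\|u\|^2 + \tfrac{1}{4\gamma}\|v\|^2$ — or more cleanly $\gamma\langle a,b\rangle \le \gamma^2\|a\|^2 + \tfrac14\|b\|^2$ with $a = F(x^{k+1},z^k)-F(x^{k+1},\bar z^{k+1})$ and $b = z^{k+1}-\bar z^{k+1}$ — so that the $\tfrac14\|z^{k+1}-\bar z^{k+1}\|^2$ piece is absorbed into the $-\tfrac12\|\bar z^{k+1}-z^{k+1}\|^2$ already present (leaving $-\tfrac14\|\bar z^{k+1}-z^{k+1}\|^2 \le 0$, hence still dominated by the stated $-\tfrac12\|\bar z^{k+1}-z^{k+1}\|^2$ up to rearrangement — I will need to check the constants line up with the statement's exact coefficients, possibly using Young's inequality with the split that keeps the factor $\gamma^2$ on the $F$-difference term as written). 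Moving $-\tfrac12\|z^k-z^{k+1}\|^2$ to the left then yields exactly \eqref{lem:3-point-inequality-conclusion}. The only delicate point I anticipate is bookkeeping the coefficients in the Young/polarization steps so that the norm-squared terms match the stated inequality precisely rather than with worse constants; the structure of the argument itself is the standard extragradient estimate and should go through routinely.
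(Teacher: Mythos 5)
Your overall strategy --- the two projection optimality conditions, a three-point identity, and a bound on the cross term $\gamma\langle F(x^{k+1},\bz^{k+1})-F(x^{k+1},z^k),\bz^{k+1}-z^{k+1}\rangle$ --- is exactly the paper's, but the middle of your argument as written does not go through. First, in the summed inequality the target term $\gamma\langle F(x^{k+1},\bz^{k+1}),\bz^{k+1}-z^{k+1}\rangle$ enters with a \emph{minus} sign in an expression that is $\le 0$, so isolating it yields a \emph{lower} bound, not the upper bound you claim; moreover the two geometric cross terms in that sum collapse to $\langle z^{k+1}-\bz^{k+1},z^{k+1}-\bz^{k+1}\rangle=\|z^{k+1}-\bz^{k+1}\|^2$, not to the three-point expression $\half\|z^k-z^{k+1}\|^2-\half\|\bz^{k+1}-z^k\|^2-\half\|\bz^{k+1}-z^{k+1}\|^2$ (that expression equals the \emph{single} inner product $\langle z^k-\bz^{k+1},\bz^{k+1}-z^{k+1}\rangle$). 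The correct bookkeeping, which is what the paper does, uses the two conditions for two different purposes: the sum yields only $\|z^{k+1}-\bz^{k+1}\|^2\le\gamma\langle F(x^{k+1},z^k)-F(x^{k+1},\bz^{k+1}),z^{k+1}-\bz^{k+1}\rangle$, hence the contraction $\|z^{k+1}-\bz^{k+1}\|\le\gamma\|F(x^{k+1},z^k)-F(x^{k+1},\bz^{k+1})\|$; separately, the \emph{first} condition alone at $z=z^{k+1}$ gives $\gamma\langle F(x^{k+1},z^k),\bz^{k+1}-z^{k+1}\rangle\le\langle z^k-\bz^{k+1},\bz^{k+1}-z^{k+1}\rangle$, and it is this single inner product that, together with $-\half\|z^k-z^{k+1}\|^2$, produces the three-point expression.

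Second, your final step via Young's inequality, $\gamma\langle a,b\rangle\le\gamma^2\|a\|^2+\tfrac14\|b\|^2$, leaves $-\tfrac14\|\bz^{k+1}-z^{k+1}\|^2$ rather than $-\half\|\bz^{k+1}-z^{k+1}\|^2$, and the domination goes the wrong way: $-\tfrac14\|\cdot\|^2\ge-\half\|\cdot\|^2$, so the resulting inequality is strictly weaker than \eqref{lem:3-point-inequality-conclusion}. (It would still suffice for Lemma \ref{lem:vi-negative}, where that term is discarded anyway, but it does not prove the lemma as stated.) To recover the exact constants, bound the cross term by Cauchy--Schwarz and then invoke the contraction above, giving $\gamma\|F(x^{k+1},\bz^{k+1})-F(x^{k+1},z^k)\|\,\|\bz^{k+1}-z^{k+1}\|\le\gamma^2\|F(x^{k+1},\bz^{k+1})-F(x^{k+1},z^k)\|^2$ with no leftover term. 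With these two repairs your argument becomes the paper's proof.
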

\begin{proof}
Note that the optimality conditions of the two subproblems for $z$ in \eqref{alg:eg-adm-rewrite-z} are given by
\be\label{optcond-z-1} \langle z^k - \gamma F(x^{k+1},z^k) - \bz^{k+1}, z-\bz^{k+1} \rangle \leq 0, \quad \forall z \in \ZCal, \ee
and
\be\label{optcond-z-2} \langle z^k - \gamma F(x^{k+1},\bz^{k+1}) - z^{k+1}, z-z^{k+1} \rangle \leq 0, \quad \forall z \in \ZCal.\ee
Letting $z=z^{k+1}$ in \eqref{optcond-z-1} and $z=\bz^{k+1}$ in \eqref{optcond-z-2}, and then summing the two resulting inequalities, we get
\be\label{optcond-z-1+optcond-z-2} \|z^{k+1}-\bz^{k+1}\|^2 \leq \gamma\langle F(x^{k+1},z^k)-F(x^{k+1},\bz^{k+1}), z^{k+1}-\bz^{k+1}\rangle, \ee
which implies
\be\label{lem-part-a} \|z^{k+1}-\bz^{k+1}\| \leq \gamma\|F(x^{k+1},z^k)-F(x^{k+1},\bz^{k+1})\|. \ee

Now we are able to prove \eqref{lem:3-point-inequality-conclusion}. We have,
\be\label{upper-bound-phi-zk+1}\ba{lll} & & \langle\gamma F(x^{k+1},\bz^{k+1}), \bz^{k+1}-z^{k+1}\rangle -\half\|z^k-z^{k+1}\|^2 \\
 & = & \gamma \langle F(x^{k+1},\bz^{k+1}) - F(x^{k+1},z^k), \bz^{k+1} - z^{k+1} \rangle \\
 &   & + \gamma \langle F(x^{k+1},z^k), \bz^{k+1} - z^{k+1} \rangle -\half\|z^k-z^{k+1}\|^2 \\
 & \leq & \gamma \langle F(x^{k+1},\bz^{k+1}) - F(x^{k+1},z^k), \bz^{k+1} - z^{k+1} \rangle \\
 &   & + \langle z^k - \bz^{k+1}, \bz^{k+1} -z^{k+1}\rangle -\half\|z^k-z^{k+1}\|^2 \\
 & = & \gamma \langle F(x^{k+1},\bz^{k+1}) - F(x^{k+1},z^k), \bz^{k+1} - z^{k+1} \rangle \\
 &   & -\half\|z^k\|^2 + \langle z^k,\bz^{k+1} \rangle + \langle \bz^{k+1},z^{k+1}-\bz^{k+1} \rangle - \half\|z^{k+1}\|^2 \\
 & \leq & \gamma \|F(x^{k+1},\bz^{k+1})-F(x^{k+1},z^k)\|\cdot\|\bz^{k+1}-z^{k+1}\| \\
 &   & + \left(-\half\|z^k\|^2 + \langle z^k,\bz^{k+1} \rangle - \half\|\bz^{k+1}\|^2\right) + \left( - \half\|\bz^{k+1}\|^2 + \langle \bz^{k+1},z^{k+1} \rangle - \half\|z^{k+1}\|^2 \right) \\
 & \leq & \gamma^2 \|F(x^{k+1},\bz^{k+1})-F(x^{k+1},z^k)\|^2 - \half\|\bz^{k+1}-z^k\|^2 - \half\|\bz^{k+1}-z^{k+1}\|^2, \\
\ea\ee
where the first inequality is obtained by letting $z=z^{k+1}$ in \eqref{optcond-z-1} and the last inequality follows from \eqref{lem-part-a}. This completes the proof.
\end{proof}

We next prove the following lemma.
\begin{lemma}\label{lem:vi-negative}
Assume that $\nabla g(y)$ is Lipschitz continuous with Lipschitz constant $L_g$, i.e.,
\be\label{lip-nabla-f-y} \|\nabla g(y_1) - \nabla g(y_2)\|\leq L_g\|y_1-y_2\|, \ \forall y_1,y_2\in\YCal. \ee
By letting $\gamma\leq 1/(2\hat{L})$, where $\hat{L} := \left(\max\{2 L_g^2+\lambda_{\max}(B^\top B), 2\lambda_{\max}(B^\top B)\}\right)^{\half}$, the following inequality holds,
\be\label{lem:vi-negative-conclusion} \langle \gamma F(x^{k+1}, \bar{z}^{k+1}), \bar{z}^{k+1}-z^{k+1} \rangle - \half\|z^k-z^{k+1}\|^2\leq 0. \ee
\end{lemma}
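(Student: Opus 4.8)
The plan is to combine Lemma \ref{lem:3-point-inequality} with an explicit bound on the ``extragradient error'' $\|F(x^{k+1},\bar{z}^{k+1})-F(x^{k+1},z^k)\|$. In view of \eqref{lem:3-point-inequality-conclusion}, and because the term $-\half\|\bar{z}^{k+1}-z^{k+1}\|^2$ there is nonpositive and can be discarded, it suffices to establish
\[\gamma^2\|F(x^{k+1},\bar{z}^{k+1})-F(x^{k+1},z^k)\|^2 \leq \half\|\bar{z}^{k+1}-z^k\|^2.\]
Once this holds, substituting it into \eqref{lem:3-point-inequality-conclusion} gives $\langle \gamma F(x^{k+1}, \bar{z}^{k+1}), \bar{z}^{k+1}-z^{k+1} \rangle - \half\|z^k-z^{k+1}\|^2 \leq \half\|\bar{z}^{k+1}-z^k\|^2 - \half\|\bar{z}^{k+1}-z^k\|^2 - \half\|\bar{z}^{k+1}-z^{k+1}\|^2 \leq 0$, which is exactly \eqref{lem:vi-negative-conclusion}.

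To obtain the displayed bound I would first write $F$ componentwise, using $\nabla_y\LCal(x,y;\lambda)=\nabla g(y)-B^\top\lambda$ and $-\nabla_\lambda\LCal(x,y;\lambda)=Ax+By-b$. Since both evaluations of $F$ are taken at the \emph{same} $x^{k+1}$, the $Ax^{k+1}$ contributions cancel, so $F(x^{k+1},\bar{z}^{k+1})-F(x^{k+1},z^k)$ has $y$-block $\bigl(\nabla g(\bar{y}^{k+1})-\nabla g(y^k)\bigr)-B^\top(\bar{\lambda}^{k+1}-\lambda^k)$ and $\lambda$-block $B(\bar{y}^{k+1}-y^k)$. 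Taking squared Euclidean norms, applying $\|a-b\|^2\leq 2\|a\|^2+2\|b\|^2$ to the $y$-block, invoking the Lipschitz hypothesis \eqref{lip-nabla-f-y} to get $\|\nabla g(\bar{y}^{k+1})-\nabla g(y^k)\|^2\leq L_g^2\|\bar{y}^{k+1}-y^k\|^2$, and using $\|B^\top v\|^2\leq \lambda_{\max}(B^\top B)\|v\|^2$ together with $\|Bv\|^2\leq \lambda_{\max}(B^\top B)\|v\|^2$ (recalling $\lambda_{\max}(BB^\top)=\lambda_{\max}(B^\top B)$), I obtain
\[\|F(x^{k+1},\bar{z}^{k+1})-F(x^{k+1},z^k)\|^2 \leq \bigl(2L_g^2+\lambda_{\max}(B^\top B)\bigr)\|\bar{y}^{k+1}-y^k\|^2 + 2\lambda_{\max}(B^\top B)\,\|\bar{\lambda}^{k+1}-\lambda^k\|^2.\]
Bounding each of the two coefficients by $\hat{L}^2=\max\{2L_g^2+\lambda_{\max}(B^\top B),\,2\lambda_{\max}(B^\top B)\}$ and recombining $\|\bar{y}^{k+1}-y^k\|^2+\|\bar{\lambda}^{k+1}-\lambda^k\|^2=\|\bar{z}^{k+1}-z^k\|^2$ yields $\|F(x^{k+1},\bar{z}^{k+1})-F(x^{k+1},z^k)\|^2\leq \hat{L}^2\|\bar{z}^{k+1}-z^k\|^2$.

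Finally, the stepsize restriction $\gamma\leq 1/(2\hat{L})$ gives $\gamma^2\hat{L}^2\leq 1/4\leq 1/2$, so $\gamma^2\|F(x^{k+1},\bar{z}^{k+1})-F(x^{k+1},z^k)\|^2\leq \half\|\bar{z}^{k+1}-z^k\|^2$, completing the argument. I do not expect a genuine difficulty; the only points needing care are the cancellation of the $Ax^{k+1}$ terms (which is precisely the reason the extragradient step keeps $x^{k+1}$ fixed), the identity relating the largest eigenvalues of $BB^\top$ and $B^\top B$, and defining $\hat{L}$ as the \emph{maximum} of the two coefficients rather than their sum so that the recombination step is tight enough.
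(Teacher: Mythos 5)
Your proposal is correct and follows essentially the same route as the paper: establish that $F(x^{k+1},\cdot)$ is Lipschitz with constant $\hat{L}$ via the same blockwise decomposition (cancellation of the $Ax^{k+1}$ terms, the bound $\|a-b\|^2\leq 2\|a\|^2+2\|b\|^2$ on the $y$-block, the Lipschitz hypothesis on $\nabla g$, and the eigenvalue bounds for $B$ and $B^\top$), then discard the nonpositive term $-\half\|\bar{z}^{k+1}-z^{k+1}\|^2$ in Lemma \ref{lem:3-point-inequality} and use $\gamma^2\hat{L}^2\leq 1/4\leq 1/2$. The only cosmetic difference is that you verify the Lipschitz bound at the specific pair $(\bar{z}^{k+1},z^k)$ rather than stating it for arbitrary $z_1,z_2$ first, which changes nothing.
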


\begin{proof}
For any $z_1\in\ZCal$ and $z_2\in\ZCal$, we have,
\[\ba{lll} & & \|F(x^{k+1},z_1) - F(x^{k+1},z_2)\|^2 \\ & = & \left\|\bpm (\nabla g(y_1) - B^\top \lambda_1)-(\nabla g(y_2) - B^\top \lambda_2) \\ (Ax^{k+1}+By_1-b)-(Ax^{k+1}+By_2-b)\epm\right\|^2 \\
 & = & \|(\nabla g(y_1)-\nabla g(y_2)) - B^\top (\lambda_1 - \lambda_2)\|^2 + \|B(y_1-y_2)\|^2 \\
 & \leq & 2 \|\nabla g(y_1)-\nabla g(y_2)\|^2 + 2 \|B^\top (\lambda_1 - \lambda_2)\|^2 + \|B(y_1-y_2)\|^2 \\
 & \leq & 2 L_g^2\|y_1-y_2\|^2 + 2\lambda_{\max}(B^\top B)\|\lambda_1 - \lambda_2\|^2 + \lambda_{\max}(B^\top B)\|y_1-y_2\|^2 \\
 & \leq & \max\{2 L_g^2+\lambda_{\max}(B^\top B), 2\lambda_{\max}(B^\top B)\} \left\|\bpm y_1-y_2 \\ \lambda_1 - \lambda_2 \epm\right\|^2 \\
 & = & \hat{L}^2 \|z_1-z_2\|^2,
\ea\]
where the second inequality is due to \eqref{lip-nabla-f-y} and the last equality is from the definition of $\hat{L}$. Thus, we know that $F(x^{k+1},z)$ is Lipschitz continuous with Lipschitz constant $\hat{L}$. Since $\gamma\leq 1/(2\hat{L})$, we have the following inequality,
\[\ba{lll}
 &      & \gamma^2 \|F(x^{k+1},\bz^{k+1})-F(x^{k+1},z^k)\|^2 - \half\|\bz^{k+1}-z^k\|^2 - \half\|\bz^{k+1}-z^{k+1}\|^2 \\
 & \leq & \gamma^2 \|F(x^{k+1},\bz^{k+1})-F(x^{k+1},z^k)\|^2 - \half\|\bz^{k+1}-z^k\|^2 \\
 & \leq & (\gamma^2 \hat{L}^2 - \half) \|\bz^{k+1}-z^k\|^2 \\
 & \leq & 0,\ea\]
which combining with \eqref{lem:3-point-inequality-conclusion} yields \eqref{lem:vi-negative-conclusion}.
\end{proof}

We further prove the following lemma.
\begin{lemma}\label{lem:main-vi}
Under the same assumptions as in Lemma \ref{lem:vi-negative}, the following holds:
\be\label{lem:main-vi-conclusion}\half\|z-z^{k+1}\|^2-\half\|z-z^k\|^2 \leq \langle\gamma F(x^{k+1},\bz^{k+1}), z-\bz^{k+1}\rangle. \ee
\end{lemma}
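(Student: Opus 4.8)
The plan is to obtain \eqref{lem:main-vi-conclusion} directly from the optimality condition \eqref{optcond-z-2} of the \emph{second} $z$-update in \eqref{alg:eg-adm-rewrite-z}, combined with Lemma \ref{lem:vi-negative}. First I would rewrite \eqref{optcond-z-2} in the equivalent projection form: since $z^{k+1} = [z^k - \gamma F(x^{k+1},\bz^{k+1})]_{\ZCal}$, we have $\langle \gamma F(x^{k+1},\bz^{k+1}), z - z^{k+1}\rangle \ge \langle z^k - z^{k+1}, z - z^{k+1}\rangle$ for all $z\in\ZCal$. Then apply the elementary three-point identity $\langle z^k - z^{k+1}, z - z^{k+1}\rangle = \half\|z-z^{k+1}\|^2 + \half\|z^k-z^{k+1}\|^2 - \half\|z-z^k\|^2$ to the right-hand side, which yields $\half\|z-z^{k+1}\|^2 - \half\|z-z^k\|^2 \le \langle \gamma F(x^{k+1},\bz^{k+1}), z - z^{k+1}\rangle - \half\|z^k-z^{k+1}\|^2$.

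Next I would split the inner product using $z - z^{k+1} = (z - \bz^{k+1}) + (\bz^{k+1} - z^{k+1})$, so the bound becomes $\half\|z-z^{k+1}\|^2 - \half\|z-z^k\|^2 \le \langle \gamma F(x^{k+1},\bz^{k+1}), z - \bz^{k+1}\rangle + \big(\langle \gamma F(x^{k+1},\bz^{k+1}), \bz^{k+1} - z^{k+1}\rangle - \half\|z^k-z^{k+1}\|^2\big)$. The parenthesized quantity is precisely the left-hand side of \eqref{lem:vi-negative-conclusion}, so by Lemma \ref{lem:vi-negative} (which is where the stepsize restriction $\gamma \le 1/(2\hat L)$ enters) it is nonpositive; dropping it gives \eqref{lem:main-vi-conclusion}.

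I do not expect a genuine obstacle: all the substantive work — the three-point estimate and the Lipschitz/stepsize argument showing the extragradient correction term is negative — has already been packaged in Lemmas \ref{lem:3-point-inequality} and \ref{lem:vi-negative}. The only points requiring care are getting the direction of the projection inequality right and tracking signs through the three-point identity, so that the $-\half\|z^k-z^{k+1}\|^2$ term pairs correctly with the $F(x^{k+1},\bz^{k+1})$ cross term needed to invoke Lemma \ref{lem:vi-negative}.
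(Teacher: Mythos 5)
Your proposal is correct and follows essentially the same route as the paper: both start from the optimality condition \eqref{optcond-z-2} of the second $z$-update, apply the three-point identity to convert $\langle z^k-z^{k+1},z-z^{k+1}\rangle$ into the difference of squared distances, split $z-z^{k+1}=(z-\bz^{k+1})+(\bz^{k+1}-z^{k+1})$, and drop the nonpositive correction term via Lemma \ref{lem:vi-negative}. No gaps.
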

\begin{proof}
Adding
\[\langle\gamma F(x^{k+1},\bz^{k+1}), z-z^{k+1}\rangle - \half\|z^{k+1}-z^k\|^2\]
to both sides of \eqref{optcond-z-2}, we get,
\be\label{proof-lem-main-eq-1}\langle z^k-z^{k+1},z-z^{k+1} \rangle - \half\|z^{k+1}-z^k\|^2 \leq \langle\gamma F(x^{k+1},\bz^{k+1}), z-z^{k+1}\rangle - \half\|z^{k+1}-z^k\|^2. \ee
Notice that the left hand side of \eqref{proof-lem-main-eq-1} is equal to $\half\|z-z^{k+1}\|^2-\half\|z-z^k\|^2$. Thus we have,
\be\label{proof-lem-main-eq-2}\ba{ll} & \half\|z-z^{k+1}\|^2-\half\|z-z^k\|^2 \\ \leq & \langle\gamma F(x^{k+1},\bz^{k+1}), z-z^{k+1}\rangle - \half\|z^{k+1}-z^k\|^2 \\ = & \langle\gamma F(x^{k+1},\bz^{k+1}), z-\bz^{k+1}\rangle + \langle\gamma F(x^{k+1},\bz^{k+1}), \bz^{k+1}-z^{k+1}\rangle-\half\|z^{k+1}-z^k\|^2 \\ \leq & \langle\gamma F(x^{k+1},\bz^{k+1}), z-\bz^{k+1}\rangle, \ea\ee
where the last inequality is due to \eqref{lem:vi-negative-conclusion}.
\end{proof}

We now give the $O(1/\epsilon)$ iteration complexity of \eqref{alg:eg-adm} (or equivalently, \eqref{alg:eg-adm-rewrite-z}) for an $\epsilon$-optimal solution to Problem \eqref{prob:min-sum-2}.
\begin{theorem}
Consider Algorithm EGADM~(\ref{alg:eg-adm}), and its sequence of iterates.
For any integer $N>0$, define
\[\tilde{x}^N := \frac{1}{N}\sum_{k=0}^N x^{k+1}, \ \tilde{y}^N := \frac{1}{N}\sum_{k=0}^N \bar{y}^{k+1}, \ \tilde{\lambda}^N := \frac{1}{N}\sum_{k=0}^N \bar{\lambda}^{k+1}.\]
Assume that $\nabla g(y)$ is Lipschitz continuous with Lipschitz constant $L_g$, and we choose $\gamma\leq 1/(2\hat{L})$, where $\hat{L} := \left(\max\{2 L_g^2+\lambda_{\max}(B^\top B), 2\lambda_{\max}(B^\top B)\}\right)^{\half}$. Moreover, we choose $H:=0$ if $A$ is an identity matrix, and $H:=\tau I - \gamma A^\top A$ when $A$ is not identity, where $\tau > \gamma \lambda_{\max}(A^\top A)$. 
For any optimal solution $(x^*,y^*,\lambda^*)\in\XCal^\star\times\YCal^\star\times\Lambda^*$ to \eqref{prob:min-sum-2},
it holds that
\be\label{eq:theorem-inequality}|f(\tilde{x}^N)+g(\tilde{y}^N)-f(x^*)-g(y^*)| = O(1/N), \quad \mbox{ and } \quad \|A\tilde{x}^N+B\tilde{y}^N-b\| = O(1/N). \ee
\end{theorem}

Note that \eqref{eq:theorem-inequality} implies that when $N=O(1/\epsilon)$, $\{\tilde{x}^N,\tilde{y}^N,\tilde{\lambda}^N\}$ is an $\epsilon$-optimal solution to Problem \eqref{prob:min-sum-2} in the sense of Definition \ref{def:epsilon-optimal-primal}; i.e., the iteration complexity of \eqref{alg:eg-adm} (or equivalently, \eqref{alg:eg-adm-rewrite-z}) for an $\epsilon$-optimal solution to Problem \eqref{prob:min-sum-2} is $O(1/\epsilon)$ in terms of both objective error and constraint violation.

\begin{proof}
The optimality conditions of the subproblem for $x$ in \eqref{alg:eg-adm} are given by
\be\label{optcond-x}\langle \theta^{k+1}-A^\top \lambda^k+\gamma A^\top(Ax^{k+1}+By^k-b)+H(x^{k+1}-x^k),x-x^{k+1}\rangle\geq 0, \quad \forall x\in\XCal,\ee
where $\theta^{k+1}\in\partial f(x^{k+1})$ denotes a subgradient of $f$ at point $x^{k+1}$.
By using the updating formula for $\blambda^{k+1}$ in \eqref{alg:eg-adm}, i.e.,
\[\blambda^{k+1} := \lambda^k + \gamma \nabla_{\lambda} \LCal(x^{k+1},y^k;\lambda^k) = \lambda^k - \gamma (Ax^{k+1}+By^k-b),\]
we obtain,
\be\label{optcond-x-rewrite} \langle \theta^{k+1}-A^\top \blambda^{k+1}+H(x^{k+1}-x^k),x-x^{k+1}\rangle\geq 0, \quad \forall x\in\XCal.\ee
Combining \eqref{lem:main-vi-conclusion} and \eqref{optcond-x-rewrite}, we have,
\be\label{the:main-eq-1}\ba{ll}& \left\langle\bpm x-x^{k+1}\\ y-\by^{k+1} \\ \lambda-\blambda^{k+1}\epm, \bpm \theta^{k+1}-A^\top \blambda^{k+1} \\ \nabla g(\by^{k+1}) - B^\top\blambda^{k+1} \\ Ax^{k+1}+B\by^{k+1}-b \epm \right\rangle \\ \geq & \frac{1}{2\gamma}\left(\|z-z^{k+1}\|^2-\|z-z^k\|^2\right)+\langle H(x^k-x^{k+1}), x-x^{k+1}\rangle.\ea\ee

Note that by using the convexity of functions $f$ and $g$ and letting $x=x^*$ and $y=y^*$, \eqref{the:main-eq-1} implies that the following holds for any $\lambda\in \br^m$:
\begin{eqnarray*}
& & f(x^*)+g(y^*)-f(x^{k+1})-g(\bar{y}^{k+1})+\left(\begin{array}{c} x^* - x^{k+1} \\ y^* - \bar{y}^{k+1} \\ \lambda-\bar{\lambda}^{k+1}\end{array} \right)^\top
\left(\begin{array}{c} -A^\top\bar{\lambda}^{k+1} \\ -B^\top\bar{\lambda}^{k+1} \\  A x^{k+1} + B\bar{y}^{k+1} - b \end{array} \right)\\
& & + \left( \frac{1}{2}\left\| x^*-x^k \right\|_H^2 + \frac{1}{2\gamma}\left\| y^*-y^k \right\|^2 + \frac{1}{2\gamma}\left\| \lambda-\lambda^k \right\|^2 \right) \\
& & - \left( \frac{1}{2}\left\| x^*-x^{k+1} \right\|_H^2 + \frac{1}{2\gamma}\left\| y^*-y^{k+1} \right\|^2 + \frac{1}{2\gamma}\left\| \lambda-\lambda^{k+1} \right\|^2 \right)  \\
& \geq & 0,
\end{eqnarray*}
which further yields that
\begin{eqnarray}\label{primal-opt-main-inequality}
\nonumber &   & f(x^*)+g(y^*)-f(\tilde{x}^N)-g(\tilde{y}^N) + \lambda^\top \left( A\tilde{x}^N + B \tilde{y}^N-b \right) \\ \nonumber
& = & f(x^*)+g(y^*)-f(\tilde{x}^N)-g(\tilde{y}^N)+\left(\begin{array}{c} x^* - \tilde{x}^N \\ y^* - \tilde{y}^N \\ \nonumber \lambda-\tilde{\lambda}^N\end{array} \right)^{\top}
\left(\begin{array}{c} -A^\top\tilde{\lambda}^N \\ -B^\top\tilde{\lambda}^N \\ A\tilde{x}^N + B \tilde{y}^N-b \end{array} \right)\\
\nonumber & \geq & \frac{1}{N+1}\sum\limits_{k=0}^{N}\left[f(x^*)+g(y^*)-f(x^{k+1})-g(\bar{y}^{k+1})+\left(\begin{array}{c} x^* - x^{k+1} \\ y^* - \bar{y}^{k+1} \\ \lambda-\bar{\lambda}^{k+1}\end{array} \right)^\top
\left(\begin{array}{c} -A^\top\bar{\lambda}^{k+1} \\ -B^\top\bar{\lambda}^{k+1} \\  A x^{k+1} + B\bar{y}^{k+1} - b \end{array} \right)\right] \\
\nonumber & \geq &\frac{1}{N+1}\sum\limits_{k=0}^N\left[ \left( \frac{1}{2}\left\| x^*-x^{k+1} \right\|_H^2 + \frac{1}{2\gamma}\left\| y^*-y^{k+1} \right\|^2 + \frac{1}{2\gamma}\left\| \lambda-\lambda^{k+1} \right\|^2 \right) \right. \\
\nonumber & & \left. -\left( \frac{1}{2}\left\| x^*-x^k \right\|_H^2 + \frac{1}{2\gamma}\left\| y^*-y^k \right\|^2 + \frac{1}{2\gamma}\left\| \lambda-\lambda^k \right\|^2 \right)\right]\\
& \geq & -\frac{1}{2\gamma(N+1)}\left\|\lambda - \lambda^0 \right\|^2 -\frac{1}{2\gamma(N+1)}\left\| y^* - y^0 \right\|^2 - \frac{1}{2(N+1)} \left\| x^* - x^0 \right\|_H^2,
\end{eqnarray}
where the first inequality is due to the convexity of $f$ and $g$.
Note that the optimality condition \eqref{kkt} and convexity of $f$ and $g$ imply the following inequality
\be\label{primal-opt-main-inequality-1}0 \geq f(x^*)+g(y^*)-f(\tilde{x}^N)-g(\tilde{y}^N) + \langle\lambda^*, A\tilde{x}^N + B \tilde{y}^N-b \rangle.\ee
Now, define $\rho:=\|\lambda^*\|+1$. By using Cauchy-Schwarz inequality in \eqref{primal-opt-main-inequality-1}, we obtain
\begin{eqnarray}\label{ergodic-N-inequality-1}
0  \leq f(\tilde{x}^N)+g(\tilde{y}^N) - f(x^*)-g(y^*)+ \rho\left\| A\tilde{x}^N + B \tilde{y}^N-b \right\|.
\end{eqnarray}
By setting $\lambda = -\rho(A\tilde{x}^N + B \tilde{y}^N-b)/\|A\tilde{x}^N + B \tilde{y}^N-b\|$ in \eqref{primal-opt-main-inequality}, and noting that $\|\lambda\|=\rho$, we obtain
\begin{eqnarray}\label{ergodic-N-inequality-2}
& f(\tilde{x}^N)+g(\tilde{y}^N) - f(x^*)-g(y^*)+ \rho\left\| A\tilde{x}^N + B \tilde{y}^N-b \right\| \nonumber \\
 \leq & \frac{\rho^2 + \|\lambda^0\|^2}{\gamma (N+1)} + \frac{1}{2\gamma(N+1)}\left\| y^* - y^0 \right\|^2 + \frac{1}{2(N+1)} \left\| x^* - x^0 \right\|_H^2.
\end{eqnarray}
We now define the function
\[v(\xi) = \min \{ f(x)+g(y) | Ax + By - b = \xi, x\in \XCal, y\in\YCal \}.\]
It is easy to verify that $v$ is convex, $v(0)=f(x^*)+g(y^*)$, and $\lambda^* \in \partial v(0)$.
Therefore, from the convexity of $v$, it holds that
\be\label{v-convex} v(\xi) \ge v(0) + \langle \lambda^*, \xi \rangle \ge f(x^*)+g(y^*) - \| \lambda^*\| \|\xi\|.\ee
Let $\bar{\xi} = A \tilde{x}^N + B\tilde{y}^N - b$, we have $f(\tilde{x}^N)+g(\tilde{y}^N) \ge v(\bar{\xi})$.
Therefore, combining \eqref{ergodic-N-inequality-1}, \eqref{ergodic-N-inequality-2} and \eqref{v-convex}, we get
\begin{eqnarray*}
- \| \lambda^*\| \| \bar{\xi} \| & \leq & f(\tilde{x}^N)+g(\tilde{y}^N) - f(x^*)- g(y^*) \\
& \leq & \frac{\rho^2 + \|\lambda^0\|^2}{\gamma (N+1)} + \frac{1}{2\gamma(N+1)}\left\| y^* - y^0 \right\|^2 + \frac{1}{2(N+1)} \left\| x^* - x^0 \right\|_H^2 - \rho\| \bar{\xi} \|,
\end{eqnarray*}
which by defining
\[C := \frac{\rho^2+\|\lambda^0\|^2}{\gamma}+\frac{\|y^*-y^0\|^2}{2\gamma}+\half\|x^*-x^0\|_H^2\]
yields,
\begin{eqnarray}\label{infeasibility}
\| A \tilde{x}^N + B\tilde{y}^N - b \| = \|\bar{\xi}\| \leq \frac{C}{N+1},
\end{eqnarray}
and
\begin{eqnarray}\label{obj-diff}
-\frac{\|\lambda^*\|C}{N+1}\leq f(\tilde{x}^N)+g(\tilde{y}^N) - f(x^*)- g(y^*) \leq \frac{C}{N+1}.
\end{eqnarray}
Combining \eqref{infeasibility} and \eqref{obj-diff} completes the proof.
\end{proof}

\begin{remark}
We note that there is another line of research on studying dual smoothing methods for solving \eqref{prob:min-sum-2}. This approach is studied in \cite{Necoara-Suykends-2008,Dinh-Necoara-Diehl-2014}. The dual smoothing method in \cite{Necoara-Suykends-2008} smoothes the Lagrangian dual function using Nesterov's smoothing technique in \cite{Nesterov-2005}, and then applies the accelerated gradient method \cite{Nesterov-2005} to solve the smoothed dual problem. The accelerated gradient method can return an $\epsilon$-optimal solution to \eqref{prob:min-sum-2} in $O(1/\epsilon)$ iterations. It should be noted that the smoothing technique used in \cite{Necoara-Suykends-2008} requires the feasible sets $\XCal$ and $\YCal$ to be bounded, while this assumption is not needed in our EGADM method. On the other hand, the dual smoothing method in \cite{Necoara-Suykends-2008} does not assume any smoothness assumptions on $f$ and $g$, while our EGADM requires $g$ to be differentiable. In \cite{Dinh-Necoara-Diehl-2014}, the authors propose to smooth the Lagrangian dual function using self-concordant barrier functions, and then apply a path-following gradient method to minimize the smoothed dual function. It is noted that this method also requires the feasible sets $\XCal$ and $\YCal$ to be bounded, and the $O(1/\epsilon)$ complexity result is only measured by the dual objective value error.
\end{remark}


\section{Fused Logistic Regression} \label{sec:fused-logistic}

In this section we show how to apply Algorithm \eqref{alg:eg-adm} to solve the fused logistic regression problem, which is a convex problem. To introduce our fused logistic regression model, we need to introduce fused lasso problem and logistic regression first.
The sparse linear regression problem, known as Lasso \cite{Tibshirani-96}, was introduced to find sparse regression coefficients so that the resulting model is more interpretable. The original Lasso model solves the following problem:
\be\label{lasso} \min \ \half\|Ax-b\|^2, \ \st \ \|x\|_1 \leq s, \ee
where $A=[a_1,\ldots,a_m]^\top \in\br^{m\times n}$ gives the predictor variables, $b=[b_1,\ldots,b_m]^\top\in\br^m$ gives the responses and the constraint $\|x\|_1\leq s$ is imposed to promote the sparsity of the regression coefficients $x$. The Lasso solution $x$ gives more interpretability to the regression model since the sparse solution $x$ ensures that only a few features contribute to the prediction.

Fused lasso was introduced by Tibshirani \etal in \cite{Tibshirani-fused-lasso-2005} to model the situation in which there is certain natural ordering in the features. Fused lasso adds a term to impose the sparsity of $x$ in the gradient space to model natural ordering in the features. The fused lasso problem can be formulated as
\be\label{fused-lasso} \min \ \half\|Ax-b\|^2 + \alpha\|x\|_1 + \beta\sum_{j=2}^n |x_j-x_{j-1}|. \ee
Because \eqref{fused-lasso} can be transformed equivalently to a quadratic programming problem, Tibshirani \etal proposed to solve \eqref{fused-lasso} using a two-phase active set algorithm SQOPT of Gill \etal \cite{Gill-SQOPT-1997}. However, transforming \eqref{fused-lasso} to a quadratic programming problem will increase the size of the problem significantly, thus SQOPT can only solve \eqref{fused-lasso} with small or medium sizes. Ye and Xie \cite{Ye-Xie-fused-lasso} proposed to solve \eqref{fused-lasso} using split Bregman algorithm, which can be shown to be equivalent to an alternating direction method of multipliers.
Note that \eqref{fused-lasso} can be rewritten equivalently as
\be\label{fused-lasso-rewrite} \ba{ll} \min & \half\|Ax-b\|^2 + \alpha \|w\|_1 + \beta \|y\|_1 \\
                                        \st & w = x \\
                                            & y = Lx, \ea \ee
where $L$ is an $(n-1)\times n$ dimensional matrix with all ones on the diagonal and negative ones on the super-diagonal and zeros elsewhere. The ADMM can be applied to solve \eqref{fused-lasso-rewrite} with $x$ being one block variable and $(w,y)$ being the other block (see \cite{Ye-Xie-fused-lasso} for more details).

As in \cite{Liu-logistic-regression-2009}, the sparse logistic regression problem \eqref{logistic-reg} can be formulated as
\be\label{logistic-reg-L1ball} \min_{x,c} \ \ell(x,c), \ \st \  \|x\|_1 \leq s. \ee

It is now very meaningful to consider the fused logistic regression problem when there is certain natural ordering in the features. This leads to the following optimization problem:
\be\label{fused-logistic-reg} \min_{x\in\br^n,c\in\br} \ \ell(x,c) + \alpha \|x\|_1 + \beta\sum_{j=2}^n |x_j-x_{j-1}|. \ee
Problem \eqref{fused-logistic-reg} can be rewritten equivalently as
\be\label{fused-logistic-reg-rewrite} \ba{ll}\min_{x\in\br^n,w\in\br^{n-1},y\in\br^n,c\in\br} & \alpha \|x\|_1 + \beta\|w\|_1 + \ell(y,c)  \\ \st & x = y \\ & w = Ly. \ea\ee
If we apply the ADMM to solve \eqref{fused-logistic-reg-rewrite}, we will end up with the following iterates:
\be\label{alg:adm-fused-log-reg} \left\{ \ba{lll} (x^{k+1},w^{k+1}) & := & \argmin_{x,w} \ \LCal_{\gamma}(x,w,y^k,c^k;\lambda_1^k,\lambda_2^k) \\
                                                  (y^{k+1},c^{k+1}) & := & \argmin_{y,c} \ \LCal_{\gamma}(x^{k+1},w^{k+1},y,c;\lambda_1^k,\lambda_2^k) \\
                                                  \lambda_1^{k+1} & := & \lambda_1^k - \gamma (x^{k+1}-y^{k+1}) \\
                                                  \lambda_2^{k+1} & := & \lambda_2^k - \gamma (w^{k+1}-Ly^{k+1}), \ea\right.  \ee
where the augmented Lagrangian function $\LCal_{\gamma}(x,w,y;\lambda_1,\lambda_2)$ is defined as
\[\ba{ll} \LCal_{\gamma}(x,w,y,c;\lambda_1,\lambda_2) :=  & \frac{1}{m}\sum_{i=1}^m \log(1+\exp(-b_i(a_i^\top y+c))) \\ & +\alpha\|x\|_1+\beta\|w\|_1-\langle\lambda_1,x-y\rangle-\langle\lambda_2,w-Ly\rangle+\frac{\gamma}{2}\|x-y\|^2+\frac{\gamma}{2}\|w-Ly\|^2.\ea\]
However, note that although the subproblem for $(x,w)$ is still easy, the subproblem for $y$ is no longer easy because of the logistic loss function $\ell(y,c)$.  But, since $\ell(y,c)$ is differentiable with respect to $(y,c)$, we can apply our EGADM to solve \eqref{fused-logistic-reg-rewrite}.
Noting that the subproblem for $(x,w)$ corresponds to two $\ell_1$ shrinkage operations. Moreover, the gradients of $\ell(y,c)$ with respect to $y$ and $c$ are easily obtainable as
\be\label{gradient-log-loss-y-c}\nabla_y\ell(y,c) = -\frac{1}{m}\hat{A}^\top (1-d), \ \nabla_c\ell(y,c)=-\frac{1}{m}b^\top(1-d), \ d = 1./(1+\exp(-\hat{A}y-cb)),\ee where $\hat{A}=[b_1a_1,b_2a_2,\ldots,b_ma_m]^\top$ and $1./\alpha$ denotes the component-wise division.

We are now ready to describe the details of EGADM for solving \eqref{fused-logistic-reg-rewrite} in
Algorithm \ref{alg:eg-adm-fused-log-reg-formal}, in which the $\ell_1$ shrinkage operation is defined as:
\[ \Shrink(z,\tau) := \sign(z) \circ \max\{|z|-\tau,0\}. \]

\begin{algorithm}\caption{Extragradient-based ADM for the Fused Logistic Regression}\label{alg:eg-adm-fused-log-reg-formal}
\begin{algorithmic}
\State Initialization: $\hat{A}=[b_1a_1,b_2a_2,\ldots,b_ma_m]^\top$
\For{$k=0,1,\ldots$}
\State $x^{k+1} := \Shrink(y^k+\lambda_1^k/\gamma,\alpha/\gamma)$
\State $w^{k+1} := \Shrink(Ly^k+\lambda_2^k/\gamma,\beta/\gamma)$
\State $d^k:=1./(1+\exp(-\hat{A}y^k-b\circ c^k))$, $\nabla_y\ell(y^k,c^k):=-\frac{1}{m}\hat{A}^\top(1-d^k)$, $\nabla_c\ell(y^k,c^k):=-\frac{1}{m}b^\top(1-d^k)$
\State $\bar{y}^{k+1} := y^k - \gamma (\nabla_{y}\ell(y^k,c^k)+\lambda_1^k+L^\top\lambda_2^k)$
\State $\bar{c}^{k+1} := c^k - \gamma \nabla_{c}\ell(y^k,c^k)$
\State $\bar{\lambda}_1^{k+1} := \lambda_1^k - \gamma (x^{k+1}-y^k)$, $\bar{\lambda}_2^{k+1} := \lambda_2^k - \gamma (w^{k+1}-Ly^{k+1})$
\State $\bar{d}^{k+1}:=1./(1+\exp(-\hat{A}\bar{y}^{k+1}-b\circ \bar{c}^{k+1}))$
\State $\nabla_y\ell(\bar{y}^{k+1},\bar{c}^{k+1}):=-\frac{1}{m}\hat{A}^\top(1-\bar{d}^{k+1})$, $\nabla_c\ell(\bar{y}^{k+1},\bar{c}^{k+1}):=-\frac{1}{m}b^\top(1-\bar{d}^{k+1})$
\State $y^{k+1} := y^k - \gamma (\nabla_{y}\ell(\bar{y}^{k+1},\bar{c}^{k+1})+\bar{\lambda}_1^{k+1}+L^\top\bar{\lambda}_2^{k+1})$
\State $c^{k+1} := c^k - \gamma \nabla_{c}\ell(\bar{y}^{k+1},\bar{c}^{k+1})$
\State $\lambda_1^{k+1} := \lambda_1^k - \gamma (x^{k+1}-\bar{y}^{k+1})$, $\lambda_2^{k+1} := \lambda_2^k - \gamma (w^{k+1}-L\bar{y}^{k+1})$
\EndFor
\end{algorithmic}
\end{algorithm}

\section{Numerical Experiments}\label{sec:numerical}
In this section, we test the performance of our EGADM for solving the fused logistic regression problem \eqref{fused-logistic-reg} and lasso problem.
Our codes were written in MATLAB. All numerical
experiments were run in MATLAB 7.12.0 on a laptop with Intel Core
I5 2.5 GHz CPU and 4GB of RAM.

\subsection{Numerical Results for Fused Logistic Regression}\label{sec:numerical-fused-logistic}

In this subsection, we report the results of our EGADM (Algorithm \ref{alg:eg-adm-fused-log-reg-formal}) for solving the fused logistic regression problem \eqref{fused-logistic-reg}.

First, we used a very simple example to show that when the features have natural ordering, the fused logistic regression model \eqref{fused-logistic-reg} is much preferable than the sparse logistic regression model \eqref{logistic-reg-L1ball}. This simple example was created in the following manner. We created the regression coefficient $\hat{x}\in\br^n$ for $n=1000$ as
\be\label{random-predictor-simple} \hat{x}_j = \left\{\ba{ll} r_1, & j = 1,2,\ldots,100 \\
                                                              r_2, & j = 201,202,\ldots,300 \\
                                                              r_3, & j = 401,402,\ldots,500 \\
                                                              r_4, & j = 601,602,\ldots,700 \\
                                                              0,   & \mbox{ else}, \ea\right.\ee
where scalers $r_1,r_2,r_3,r_4$ were created randomly uniform in $(0,20)$. An example plot of $\hat{x}$ is shown in the left part of Figure \ref{fig:random-original}. The entries of matrix $A \in\br^{m\times n}$ with $m=500$ and $n=1000$ were drawn from standard normal distribution $\mathcal{N}(0,1)$. Vector $b\in\br^m$ was then created as the signs of $A\hat{x}+ce$, where $c$ is a random number in $(0,1)$ and $e$ is the $m$-dimensional vector of all ones. We then applied our extragradient-based ADM (Algorithm \ref{alg:eg-adm-fused-log-reg-formal}) for solving the fused logistic regression problem \eqref{fused-logistic-reg} and compared the result with the sparse logistic regression problem \eqref{logistic-reg-L1ball}. The code for solving \eqref{logistic-reg-L1ball}, which is called Lassplore and proposed by Liu \etal in \cite{Liu-logistic-regression-2009}, was downloaded from http://www.public.asu.edu/$\sim$jye02/Software/lassplore/. Default settings of Lassplore were used.  We chose $\alpha=5\times 10^{-4}$ and $\beta=5\times 10^{-2}$ in \eqref{fused-logistic-reg}. The regression result by EGADM (Algorithm \ref{alg:eg-adm-fused-log-reg-formal}) is plotted in Figure \ref{fig:fused-random} (a). We tested different choices of $s=1,5,10$ in \eqref{logistic-reg-L1ball} and the results are plotted in Figure \ref{fig:fused-random} (b), (c) and (d), respectively. From Figure \ref{fig:fused-random} we see that, the fused logistic regression model \eqref{fused-logistic-reg} can preserve the natural ordering very well. The sparse logistic regression model \eqref{logistic-reg-L1ball} gives very sparse solution when $s$ is small, and gives less sparse solution when $s$ is large, but none of the choices of $s=1,5,10$ gives a solution that preserves the natural ordering.

\begin{figure}
\centering \subfigure{\includegraphics[scale=0.5]{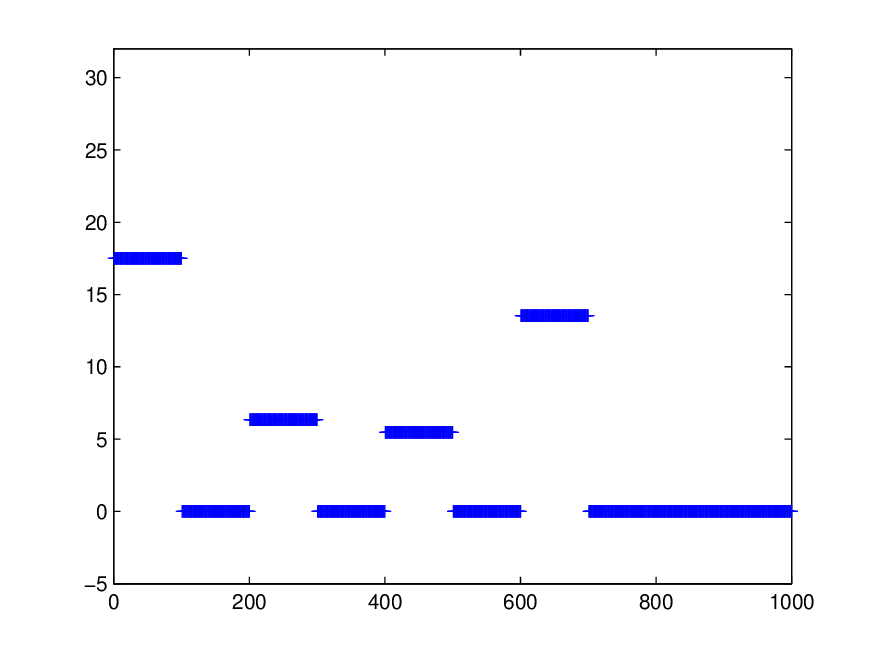}}
\centering \subfigure{\includegraphics[scale=0.5]{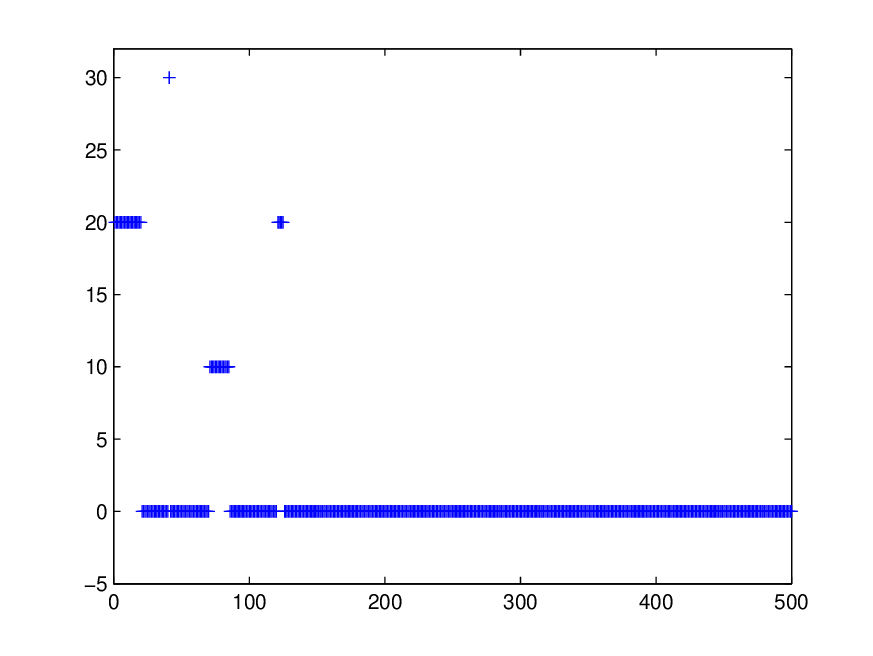}}
\caption{Left: The regression coefficient given in \eqref{random-predictor-simple}; Right: The regression coefficient given in \eqref{random-predictor}.}
\label{fig:random-original}
\end{figure}

To further show the capability of our EGADM for solving the fused logistic regression model \eqref{fused-logistic-reg}, especially for large-scale problems, we conducted the following tests. First, we created the regression coefficient $\hat{x}\in\br^n$ for $n\geq 100$ as
\be\label{random-predictor}\hat{x}_j = \left\{\ba{ll} 20, & j=1,2,\ldots,20,\\
                       30, & j=41, \\
                       10, & j=71,\ldots,85,\\
                       20, & j=121,\ldots,125,\\
                       0 , & \mbox{ else}.\ea\right.\ee
Note that a similar test example was used in \cite{Ye-Xie-fused-lasso} for the fused lasso problem.
An example plot of $\hat{x}$ of size $n=500$ is shown in the right part of Figure \ref{fig:random-original}. We then created matrix $A$ and vector $b$ in the same way mentioned above. We applied our EGADM to solve the fused logistic regression model \eqref{fused-logistic-reg} with the above mentioned inputs $A$ and $b$. We report the iteration number, CPU time, sparsity of $x$ (denoted by $\|x\|_0$) and sparsity of the fused term $Lx$ (denoted by $\|Lx\|_0$) in Table \ref{tab:fused-logistic-reg}. From Table \ref{tab:fused-logistic-reg} we see that our EGADM can solve the fused logistic regression problem \eqref{fused-logistic-reg} efficiently. It solved instances with size up to $m=2000$, $n=20000$ in just a few seconds.

\begin{figure}
\centering \subfigure{\includegraphics[scale=0.5]{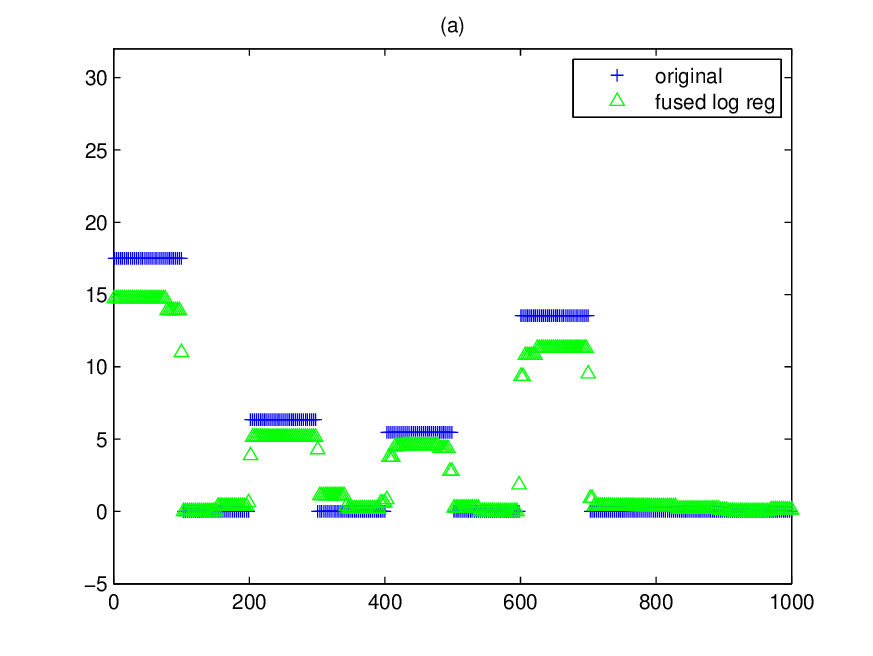}}
\centering \subfigure{\includegraphics[scale=0.5]{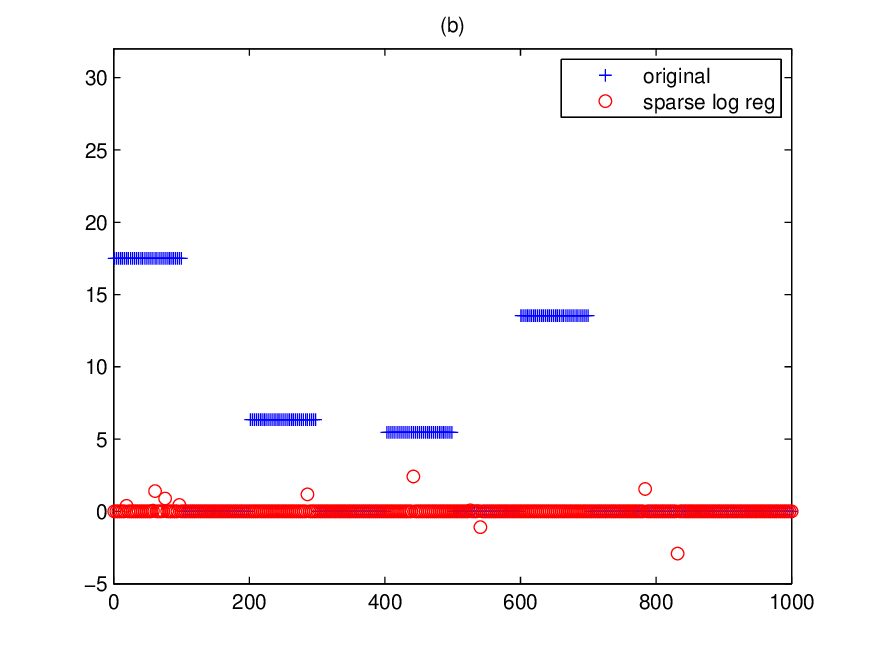}}
\centering \subfigure{\includegraphics[scale=0.5]{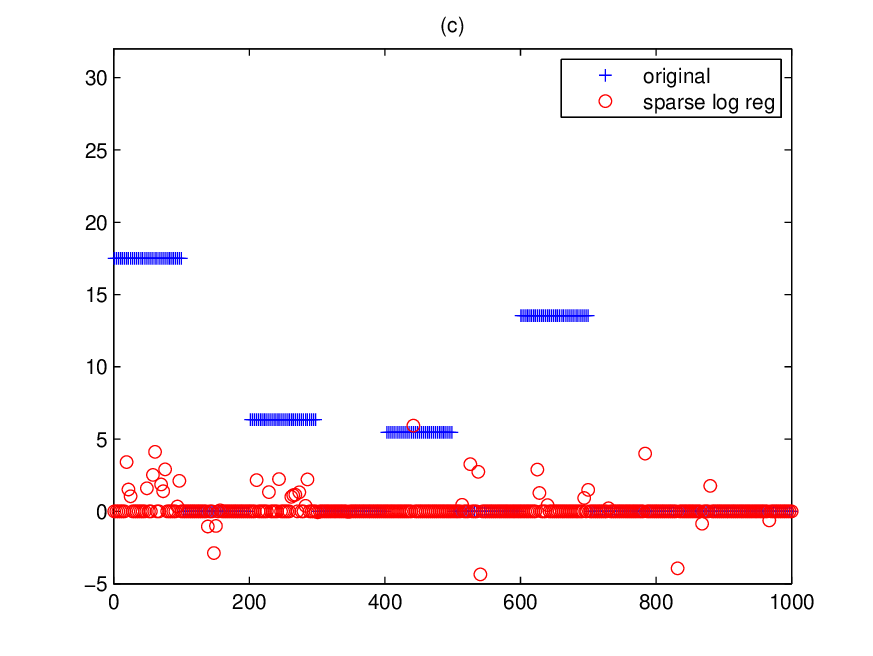}}
\centering \subfigure{\includegraphics[scale=0.5]{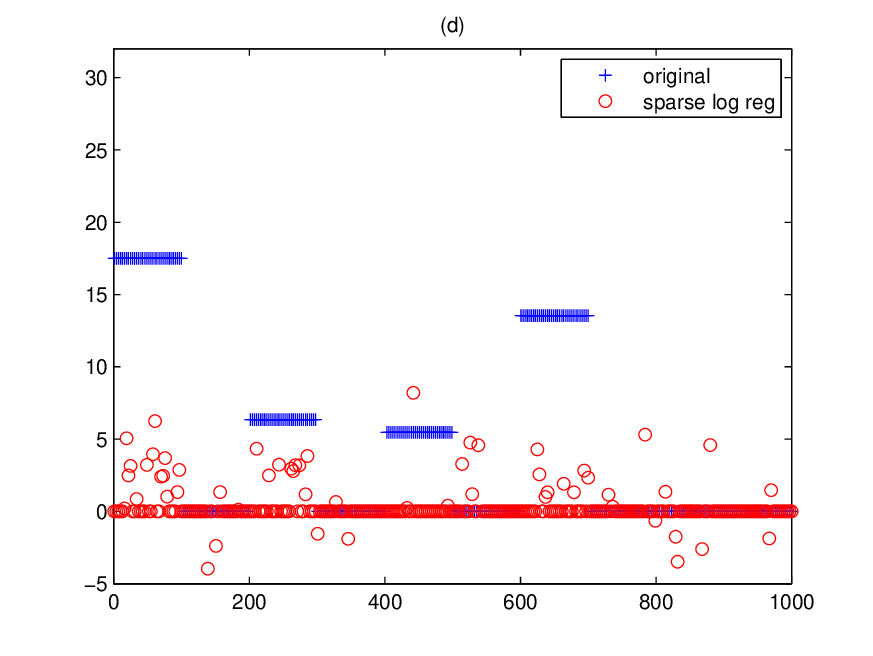}}
\caption{(a): The regression result by the fused logistic regression model \eqref{fused-logistic-reg}; (b), (c), (d): The regression result by the sparse logistic regression model \eqref{logistic-reg-L1ball} with $s=1,5,10$, respectively.}
\label{fig:fused-random}
\end{figure}

\begin{table}[ht]
\begin{center}\caption{{Numerical Results for Fused Logistic Regression}}\label{tab:fused-logistic-reg}
\begin{tabular}{| c c  | c c c c |}\hline
$m$  & $n$    &  iter   &   cpu    & $\|x\|_0$   & $\|Lx\|_0$   \\ \hline
100  & 500    &  104    &   0.0    & 37          & 40           \\\hline
100  & 1000   &  112    &   0.1    & 41          & 46           \\\hline
100  & 2000   &  105    &   0.2    & 71          & 88            \\\hline
1000 & 2000   &  69     &   0.6    & 25          & 21            \\\hline
1000 & 5000   &  79     &   1.6    & 25          & 26             \\\hline
1000 & 10000  &  53     &   2.0    & 25          & 25             \\\hline
2000 & 5000   &  94     &   3.5    & 40          & 36              \\\hline
2000 & 10000  &  238    &   17.1   & 40          & 6                \\\hline
2000 & 20000  &  84     &   11.5   & 40          & 42               \\\hline
\end{tabular}
\end{center}
\end{table}

\subsection{Numerical Results for Lasso Problem}\label{sec:numerical-lasso}

To understand how our new method compares to other well established methods when all are applicable, in this subsection, we experiment our EGADM in comparison with ISTA \cite{Beck-Teboulle-2009} and ADMM for solving the following unconstrained version of the lasso problem \eqref{lasso}:
\be\label{prob:lasso}
\min \ \tau\|x\|_1 + \half\|Ax-b\|^2,
\ee
where $\tau>0$ is a given weighting parameter. This problem can be naturally solved by ISTA \cite{Beck-Teboulle-2009}. A typical iteration of ISTA for solving \eqref{prob:lasso} can be described as:
\be\label{ista-lasso} x^{k+1} := \argmin_x \ \gamma\tau\|x\|_1 + \half\|x-(x^k-\gamma A^\top(Ax^k-b))\|^2, \ee
where $\gamma > 0$ is the step size of the gradient step. By applying a variable-splitting technique, \eqref{prob:lasso} can be equivalently written as
\be\label{prob:lasso-x-y} \min \ \tau\|x\|_1 + \half\|Ay-b\|^2, \ \st, \ x-y =0, \ee
which can be solved by both EGADM and ADMM. A typical iteration of EGADM for solving \eqref{prob:lasso-x-y} is
\be\label{alg:egadm-lasso}\left\{\ba{lll}
x^{k+1} & := & \argmin_x \ \tau\|x\|_1 - \langle \lambda^k, x-y^k \rangle + \frac{\gamma}{2}\|x-y^k\|^2 \\
\bar{y}^{k+1} & := & y^k - \gamma(A^\top(Ay^k-b)+\lambda^k) \\
\bar{\lambda}^{k+1} & := & \lambda^k - \gamma(x^{k+1}-y^k) \\
y^{k+1} & := & y^k - \gamma(A^\top(A\bar{y}^{k+1}-b)+\bar{\lambda}^{k+1}) \\
\lambda^{k+1} & := & \lambda^k - \gamma(x^{k+1}-\bar{y}^{k+1}),
\ea\right.\ee
and a typical iteration of ADMM for solving \eqref{prob:lasso-x-y} is
\be\label{alg:admm-lasso}\left\{\ba{lll}
x^{k+1} & := & \argmin_x \ \tau\|x\|_1 - \langle \lambda^k, x-y^k \rangle + \frac{\gamma}{2}\|x-y^k\|^2 \\
y^{k+1} & := & \argmin_y \ \half\|Ay-b\|^2 - \langle \lambda^k, x^{k+1}-y \rangle + \frac{\gamma}{2}\|x^{k+1}-y\|^2 \\
\lambda^{k+1} & := & \lambda^k - \gamma(x^{k+1}-y^{k+1}).
\ea\right.\ee
The main computational efforts in these three algorithms are as follows. In each iteration of ISTA, an $\ell_1$ shrinkage operation and two matrix vector multiplications are needed; in each iteration of EGADM, an $\ell_1$ shrinkage operation and four matrix vector multiplications are needed; while in each iteration of ADMM, an $\ell_1$ shrinkage operation and solving one linear system are required. Note that the lasso problem is suitable for ADMM because the $y$-subproblem is still easy to solve. We use this problem as an example to illustrate the relation among ISTA, EGADM and ADMM. In fact, even if the $y$-subproblem is not easy to solve, an inexact version of ADMM can still be applied. For instance, instead of solving the $y$-subproblem in \eqref{alg:admm-lasso} directly by solving a linear system, we can run gradient descent method for $M$ iterations to get an approximate solution to it. Therefore, in our experiments for solving the lasso problem \eqref{prob:lasso}, we implemented the following comparison to compare the performance of ISTA, EGADM and ADMM. In our numerical experiments, we always set $\tau=0.1$ for simplicity.
The instances for different ($m,n$) were created randomly in the following way. We first generated matrix $A\in\mathbb{R}^{m\times n}$ randomly according to normal distribution. We then normalized $A$ such that the largest singular value of $A$ is $1$. This normalization was implemented so that the step size $\gamma$ can be selected easily. A sparse $x$ was then created such that the number of nonzero components was equal to $n/10$, and their positions were selected uniformly randomly. We then set $b=Ax$. We first ran ISTA with $\gamma=1$ for $100$ iterations, and recorded the resulting objective function value of \eqref{prob:lasso} (denoted by $f_I$). We then ran EGADM and ADMM with different $\gamma$ until the objective function value was smaller than $f_I$, or the maximum number of iteration (set as $1000$) was achieved. Moreover, we also ran the inexact version ADMM described above for different $M$ to see the comparison result. The results are reported in Tables \ref{tab:lasso-1} and \ref{tab:lasso-2}. In particular, ADMM-5 and ADMM-10 indicate that we respectively ran the gradient descent method for $5$ and $10$ iterations to get an approximate solution to the $y$-subproblem in \eqref{alg:admm-lasso}; $mvm$ denotes the total number of matrix vector multiplications. In Tables \ref{tab:lasso-1} and \ref{tab:lasso-2}, we reported the results for different $\gamma$. Note that we used the same $\gamma$ for ISTA, EGADM and ADMM. Moreover, the same $\gamma$ was also used as the step size for the gradient descent method for solving the $y$-subproblems in ADMM-5 and ADMM-10.

{\small
\begin{table}[ht]
\begin{center}\caption{{Numerical Results for Lasso problem}}\label{tab:lasso-1}
\begin{tabular}{| l | ll | ll | lll | lll | lll |}\hline
         & \multicolumn{2}{|c|}{ISTA} & \multicolumn{2}{|c|}{ADMM} & \multicolumn{3}{|c|}{ADMM-5} & \multicolumn{3}{|c|}{ADMM-10} & \multicolumn{3}{|c|}{EGADM} \\\hline
$(m,n)$ & iter & cpu & iter & cpu & iter & mvm & cpu & iter & mvm & cpu & iter & mvm & cpu \\\hline
\multicolumn{14}{|c|}{$\gamma=1.0$} \\\hline
 (100,1000) & 100 &    0.1 & 101 &    5.2 & 102 & 510 &    0.1 & 1000 & 10000 &    0.7 & 102 & 408 &    0.0 \\\hline
 (100,2000) & 100 &   0.3 & 101 &   30.7 & 103 & 515 &    0.1 & 604 & 6040 &    0.9 & 102 & 408 &    0.1 \\\hline
 (100,5000) & 100  &    0.8 & 102 &  435.9 & 105 & 525 &    0.6 & 220 & 2200 &    2.2 & 102 & 408 &    0.2 \\\hline
 (100,8000) & 100  &    1.3 & 102 & 1642.8 & 106 & 530 &    3.7 & 321 & 3210 &    5.9 & 102 & 408 &    0.5 \\\hline
 (1000,100) & 100 &    0.1 & 53 &    0.0 & 1000 & 5000 &    0.3 & 1000 & 10000 &    0.6 & 108 & 432 &    0.0 \\\hline
 (1000,200) & 100  &    0.3 & 88 &    0.1 & 1000 & 5000 &    0.5 & 1000 & 10000 &    1.0 & 165 & 660 &    0.0 \\\hline
 (2000,200) & 100  &    0.6 & 55 &    0.1 & 1000 & 5000 &    2.5 & 1000 & 10000 &    4.3 & 99 & 396 &    0.1 \\\hline
 (5000,100) & 100  &    0.7 & 35 &    0.0 & 1000 & 5000 &    4.6 & 1000 & 10000 &    8.2 & 40 & 160 &    0.1 \\\hline
 (5000,200) & 100 &    1.5 & 43 &    0.1 & 1000 & 5000 &   10.4 & 1000 & 10000 &   19.6 & 66 & 264 &    0.3 \\\hline
 (8000,100) & 100  &    1.3 & 45 &    0.1 & 1000 & 5000 &    8.5 & 1000 & 10000 &   16.2 & 46 & 184 &    0.2 \\\hline
 (8000,200) & 100  &    2.3 & 46 &    0.2 & 1000 & 5000 &   16.4 & 1000 & 10000 &   31.2 & 62 & 248 &    0.5 \\\hline
\multicolumn{14}{|c|}{$\gamma=0.8$} \\\hline
 (100,1000) & 100  &    0.1 & 81 &    3.9 & 82 & 410 &    0.0 & 81 & 810 &    0.1 & 127 & 508 &    0.0 \\\hline
 (100,2000) & 100  &    0.3 & 81 &   23.1 & 82 & 410 &    0.1 & 81 & 810 &    0.1 & 127 & 508 &    0.1 \\\hline
 (100,5000) & 100  &    0.8 & 81 &  320.3 & 82 & 410 &    0.5 & 82 & 820 &    0.7 & 127 & 508 &    0.3 \\\hline
 (100,8000) & 100  &    1.2 & 81 & 1295.8 & 82 & 410 &    0.8 & 82 & 820 &    1.4 & 127 & 508 &    0.6 \\\hline
 (1000,100) & 100  &    0.1 & 42 &    0.0 & 44 & 220 &    0.0 & 42 & 420 &    0.0 & 57 & 228 &    0.0 \\\hline
 (1000,200) & 100  &    0.3 & 62 &    0.1 & 53 & 265 &    0.0 & 55 & 550 &    0.1 & 79 & 316 &    0.0 \\\hline
 (2000,200) & 100  &    0.6 & 1000 &    1.4 & 44 & 220 &    0.1 & 1000 & 10000 &    4.5 & 65 & 260 &    0.1 \\\hline
 (5000,100) & 100  &    0.7 & 31 &    0.0 & 30 & 150 &    0.1 & 30 & 300 &    0.2 & 36 & 144 &    0.1 \\\hline
 (5000,200) & 100  &    1.4 & 34 &    0.1 & 33 & 165 &    0.4 & 34 & 340 &    0.7 & 45 & 180 &    0.2 \\\hline
 (8000,100) & 100  &    1.2 & 32 &    0.1 & 30 & 150 &    0.3 & 36 & 360 &    0.6 & 52 & 208 &    0.2 \\\hline
 (8000,200) & 100  &    2.4 & 35 &    0.1 & 34 & 170 &    0.6 & 31 & 310 &    1.0 & 45 & 180 &    0.4 \\\hline
 \end{tabular}
\end{center}
\end{table}
}

{\small
\begin{table}[ht]
\begin{center}\caption{{Numerical Results for Lasso problem}}\label{tab:lasso-2}
\begin{tabular}{| l | ll | ll | lll | lll | lll |}\hline
         & \multicolumn{2}{|c|}{ISTA} & \multicolumn{2}{|c|}{ADMM} & \multicolumn{3}{|c|}{ADMM-5} & \multicolumn{3}{|c|}{ADMM-10} & \multicolumn{3}{|c|}{EGADM} \\\hline
$(m,n)$ & iter & cpu & iter & cpu & iter & mvm & cpu & iter & mvm & cpu & iter & mvm & cpu \\\hline
\multicolumn{14}{|c|}{$\gamma=0.5$} \\\hline
 (100,1000) & 100 &    0.1 & 51 &    2.4 & 67 & 335 &    0.0 & 54 & 540 &    0.0 & 202 & 808 &    0.0 \\\hline
 (100,2000) & 100  &    0.3 & 51 &   14.6 & 67 & 335 &    0.1 & 54 & 540 &    0.1 & 202 & 808 &    0.1 \\\hline
 (100,5000) & 100 &    0.7 & 51 &  206.9 & 67 & 335 &    0.4 & 54 & 540 &    0.5 & 202 & 808 &    0.5 \\\hline
 (100,8000) & 100 &    1.2 & 51 &  793.9 & 67 & 335 &    0.7 & 54 & 540 &    0.9 & 202 & 808 &    0.9 \\\hline
 (1000,100) & 100 &    0.1 & 36 &    0.0 & 36 & 180 &    0.0 & 36 & 360 &    0.0 & 95 & 380 &    0.0 \\\hline
 (1000,200) & 100  &    0.3 & 46 &    0.1 & 42 & 210 &    0.0 & 58 & 580 &    0.1 & 128 & 512 &    0.0 \\\hline
 (2000,200) & 100  &    0.6 & 1000 &    1.3 & 52 & 260 &    0.1 & 41 & 410 &    0.2 & 126 & 504 &    0.2 \\\hline
 (5000,100) & 100  &    0.7 & 33 &    0.0 & 31 & 155 &    0.1 & 31 & 310 &    0.3 & 65 & 260 &    0.1 \\\hline
 (5000,200) & 100 &    1.4 & 33 &    0.1 & 34 & 170 &    0.4 & 35 & 350 &    0.7 & 75 & 300 &    0.4 \\\hline
 (8000,100) & 100  &    1.2 & 35 &    0.1 & 37 & 185 &    0.3 & 49 & 490 &    0.8 & 71 & 284 &    0.3 \\\hline
 (8000,200) & 100 &    2.2 & 34 &    0.1 & 37 & 185 &    0.6 & 33 & 330 &    1.0 & 72 & 288 &    0.5 \\\hline
\multicolumn{14}{|c|}{$\gamma=0.1$} \\\hline
 (100,1000) & 100 &   0.1 & 12 &    0.5 & 206 & 1030 &    0.1 & 106 & 1060 &    0.1 & 1000 & 4000 &    0.2 \\\hline
 (100,2000) & 100 &    0.3 & 12 &    3.6 & 206 & 1030 &    0.2 & 106 & 1060 &    0.1 & 1000 & 4000 &    0.4 \\\hline
 (100,5000) & 100 &    0.7 & 12 &   46.1 & 206 & 1030 &    1.1 & 106 & 1060 &    1.0 & 1000 & 4000 &    2.5 \\\hline
 (100,8000) & 100 &   1.2 & 12 &  192.3 & 206 & 1030 &    2.1 & 106 & 1060 &    1.9 & 1000 & 4000 &    4.4 \\\hline
 (1000,100) & 100 &   0.1 & 122 &    0.0 & 145 & 725 &    0.0 & 109 & 1090 &    0.1 & 592 & 2368 &    0.1 \\\hline
 (1000,200) & 100 &     0.3 & 114 &    0.1 & 207 & 1035 &    0.1 & 127 & 1270 &    0.1 & 935 & 3740 &    0.2 \\\hline
 (2000,200) & 100 &    0.6 & 118 &    0.2 & 173 & 865 &    0.5 & 137 & 1370 &    0.6 & 660 & 2640 &    0.8 \\\hline
 (5000,100) & 100 &     0.8 & 108 &    0.1 & 112 & 560 &    0.6 & 118 & 1180 &    1.0 & 363 & 1452 &    0.8 \\\hline
 (5000,200) & 100 &     1.4 & 107 &    0.2 & 121 & 605 &    1.3 & 115 & 1150 &    2.3 & 480 & 1920 &    2.4 \\\hline
 (8000,100) & 100 &     1.2 & 117 &    0.1 & 138 & 690 &    1.2 & 144 & 1440 &    2.3 & 360 & 1440 &    1.4 \\\hline
 (8000,200) & 100 &   2.4 & 110 &    0.3 & 128 & 640 &    2.1 & 133 & 1330 &    4.3 & 425 & 1700 &    3.2 \\\hline
\end{tabular}
\end{center}
\end{table}
}

From Tables \ref{tab:lasso-1} and \ref{tab:lasso-2} we have the following observations. When $\gamma=1.0$ and $m<n$, the number of iterations of EGADM, ADMM, ADMM-5 and ADMM-10 are similar with ISTA, while ADMM is more costly because it needs to solve an $n\times n$ linear system in each iteration. When $\gamma=1.0$ and $m>n$, ADMM is better than other solvers, because now the linear system is in a relatively small size. Moreover, EGADM also performs well when $m>n$. It needs less iteration number than ISTA (except one instance). However, ADMM-5 and ADMM-10 do not perform well, because the step size $\gamma$ is too large.
When $\gamma$ is modest, i.e., $\gamma=0.8$ and $\gamma=0.5$, ADMM, ADMM-5 and ADMM-10 usually perform better than ISTA in terms of number of iterations. EGADM is comparable to ISTA in these two cases in the sense that EGADM needs more iterations than ISTA when $m<n$ and less iterations when $m>n$. When $\gamma=0.1$, ADMM-5, ADMM-10 and EGADM are worse than ISTA, but we emphasize here that ISTA always uses a large step size $\gamma=1$.

Based on these comparison results, we may conclude as follows.
First, ADMM is usually better than ISTA and EGADM when the two subproblems can be easily solved. Second, ADMM is not very sensitive to the selection of the penalty parameter $\gamma$, while ISTA and EGADM depends on it because these two methods are more like gradient method and need to choose step size properly. In the case that one subproblem is not easy to solve in ADMM, running gradient descent method for several iterations to get an approximate solution to the subproblem usually works. However, choosing the step size $\gamma$ is very crucial to the performance. Furthermore, we emphasize again that ISTA only works for problems without linear constraints, and ADMM works when both subproblems can be solved easily. In contrast, EGADM provides something in the middle: it works when linear constraints are present and one subproblem is not easy to solve.

\begin{remark}
We remark here that as shown in many references, the current state-of-the-art algorithm for solving the lasso problem \eqref{prob:lasso}, is the randomized coordinate descent method (see, e,g, \cite{richtarik-mpa-2014,Richtarik-Takac-2012-parallel,Zhang-Tong-MPA-2015}). In our numerical comparison conducted in this subsection, we only compared our EGADM with ISTA and ADMM. We did not compare with the randomized coordinate descent method because EGADM, ISTA and ADMM are all batch type methods and we want to focus the comparison among the efficient batch methods for lasso problem. Moreover, stochastic ADMM as studied in \cite{Suzuki-icml-2014} can solve the dual problem of \eqref{prob:min-sum-2}. It is suitable for solving problems where $f$ is the empirical loss function in machine learning or data fitting problems. For example, it can be applied to solve the lasso problem \eqref{prob:lasso} or the so-called graph lasso problem. However, the method in \cite{Suzuki-icml-2014} assumes that the proximal mappings of both $f$ and $g$ are easily computable. As a result, it is not suitable for solving the fused logistic regression problem \eqref{fused-logistic-reg}.
\end{remark}

\begin{remark}
Since our proposed EGADM method is a variant of ADMM, it is capable of solving problems at least as large as that can be handled by ADMM. Moreover, the main computational effort in each iteration of EGADM is computing the proximal mapping of function $f$ and computing the gradient of function $g$. EGADM is efficient as long as these computations can be done relatively easily. From Tables  \ref{tab:fused-logistic-reg}, \ref{tab:lasso-1} and \ref{tab:lasso-2} we see that EGADM is efficient for at least medium-sized problems. For even larger problems, we may implement some parallel or randomized versions of EGADM which will be left as future research topics.
\end{remark}

\section{Concluding Remarks}\label{sec:conclusion}

In this paper, we proposed a new alternating direction method based on extragradient for solving convex minimization problems with the objective function being the sum of two convex functions. The proposed method applies to the situation where only one of the involved functions has easy proximal mapping, while the other function is only known to be smooth. Under the assumption that the smooth function has a Lipschitz continuous gradient, we proved that the proposed method finds an $\epsilon$-optimal solution within $O(1/\epsilon)$ iterations. We used the lasso problem to illustrate the performance of the proposed method and compared its performance with some existing solvers for this problem. We also proposed a new statistical model, namely fused logistic regression, that can preserve the natural ordering of the features in logistic regression. Preliminary numerical results showed that this new model is preferable than the sparse logistic regression model when there exists natural ordering in the features. The numerical results also showed that our extragradient-based ADM can solve large-scale fused logistic regression model efficiently.

It is noted that we only considered problems with two block variables in this paper. If there are $N$-block variables with $N\geq 3$, the proposed extragradient-based ADM is still applicable. In particular, if the first $N-1$ functions have easy proximal mappings and the last one does not but is smooth, we can apply the multi-block ADMM to solve this problem, but replacing the minimization for the augmented Lagrangian function with respect to the last block variable by an extragradient step. However, the convergence properties of this algorithm are currently not known, and this will be a topic for the future research. One simplification of EGADM is {\em GADM}; that is, replacing the extra-gradient steps by a single gradient step. The method was proposed in an earlier version of this paper, whose numerical performance was found to be comparable with EGADM; however, the convergence status of GADM was unknown to us. Very recently, \cite{Gao-Jiang-Zhang-2014} resolved the issue and proved that the iteration complexity of GADM remains $O(1/\epsilon)$.

\section*{Acknowledgements}

We thank Lingzhou Xue and Hui Zou for fruitful discussions on logistic regression and fused lasso. We are also grateful to two anonymous referees for their constructive comments that have helped improve the presentation of this paper greatly.

\newpage

\bibliographystyle{plain}
\bibliography{All}

\end{document}